	\newcolumntype{P}[1]{>{\centering\arraybackslash}p{#1}}
	\newcolumntype{M}[1]{>{\centering\arraybackslash}m{#1}}
\theoremstyle{plain}
	\newtheorem{lemma}{Lemma}
	\newtheorem{proposition}{Proposition}
	\newtheorem{theorem}{Theorem}
\theoremstyle{definition}
	\newtheorem{definition}{Definition}
\theoremstyle{remark}
	\newtheorem{remark}{Remark}
	\newtheorem{example}{Example}
    \newtheorem{algorithm}{Algorithm}	
\newcommand{\fn}{\mathfrak{n}}
\newcommand{\fg}{\mathfrak{g}}
\newcommand{\fu}{\mathfrak{u}}
\newcommand{\fm}{\mathfrak{m}}
\newcommand{\fr}{\mathfrak{r}}
\newcommand{\fs}{\mathfrak{s}}
\newcommand{\fgl}{\mathfrak{gl}}
\newcommand{\fsl}{\mathfrak{sl}}
\newcommand{\ku}{\mathbb{K}}
\newcommand\blank{{\mkern 2mu\cdot\mkern 2mu}}
\DeclareMathOperator{\ad}{ad} 
\DeclareMathOperator{\der}{Der} 
\DeclareMathOperator{\id}{Id} 
\DeclareMathOperator{\spa}{span}
\title{Lie structures and chain ideal lattices}
\author{Pilar Benito and Jorge Roldán-López}
\date{December 2, 2022}
\begin{document}
\maketitle

\begin{abstract}
    The purpose of this paper is twofold. Firstly, to emphasise that the class of Lie algebras with chain lattices of ideals are elementary blocks in the embedding or decomposition of Lie algebras with finite lattice of ideals. Secondly, to show that the number of Lie algebras of this class is large and they support other types of Lie structures. Beginning with general examples and algebraic decompositions, we focus on computational algorithms to build Lie algebras in which the lattice of ideals is a chain. The chain condition forces gradings on the nilradicals of this class of algebras. Our algorithms yield to several positive naturally graded parametric families of Lie algebras. Further generalizations and other kind of structures will also be discussed.
\end{abstract}

\noindent\textbf{Keywords}: Lie algebra, quasi-cyclic, Carnot, finite lattice, chain, algorithm, transvection, naturally graded algebra.

\noindent\textbf{MSC classification}: 17B05, 17B10, 17B70, 03G10.  

\section{Introduction}


The algebraic structure of a Lie algebra imposes strong properties on its lattice of ideals. Although, the lattice of ideals does not always determine the Lie algebra in a unique way, many fundamental properties of Lie algebras can be interpreted as properties about their lattices. The lattice of ideals of a \emph{thin algebra} is a sequence of diamonds (subspaces of a two-dimensional vector space) connected by chains (see \cite[Section 1]{mattarei2022constituents}). For finite-dimensional algebras, according to \cite[Corollary 2.10]{Benito_2020}, reductive Lie algebras (i.e., direct sum of ideals of semisimple by abelian) are just the class of Lie algebras with complemented lattice of ideals. If we impose the additional condition of finiteness to a complemented lattice, we get the class of semisimple or semisimple by one-dimensional Lie algebras, whose lattices of ideals are hypercubes. In fact, finite lattices of ideals of Lie algebras are sublattices of hypercube lattices (see~\cite[Theorem 2.9]{Benito_2020}). 

From Dilworth's Chain Decomposition Theorem \cite{dilworth2009decomposition}, any finite lattice decomposes as a disjoint union of chains. This fact highlights $n$-element chain lattices as basic blocks for embedding or decomposing finite lattices. In \cite[Theorem 2.2 and Theorem 3.4]{pilar1992lie}, structural-theoretic characterizations of finite-dimensional Lie algebras of characteristic zero whose lattice of ideals is a chain are given. Over algebraically closed fields, the complete list of solvable Lie algebras in this family appears in Theorem 3.4 (basis and bracket description). For dimension greater or equal than $2$, these algebras are one-dimensional extensions of some nilpotent Lie algebra $\fn$ by an invertible self-derivation $d$ with integer positive eigenvalues. So $\fg=\ku\cdot d\oplus \fn$ and $\fn$ is either a \emph{generalised Heissenberg algebra}, or a \emph{filiform} one if the nilpotent index is greater or equal than $3$ or a \emph{thin algebra} with two diamonds (the centre and $\fn/\fn^2$ have dimension two and $\frac{\fn^i}{\fn^{i+1}}$ is one-dimensional otherwise). The common pattern to all $\fn$ is that they are \emph{naturally graded} algebras and generated (as algebras) by a subspace $\fu$, so $\fn=\oplus_{k\geq 1}\fu^k$ with $\fu^1=\fu$ and $\fu^{i}=[\fu,\fu^{i-1}]$. A Lie algebra satisfying previous conditions is called \emph{homogeneous} or \emph{quasi-cyclic} (first definition in \cite{leger1963derivations}). Even more, each nilradical $\fn$ is a flexible Lie algebra and the whole algebra $\fg$ is semi-contractable ($\fg_0=\ku\cdot d$) according to \cite[Definition 3.1]{cornulier2016gradings}.

There is a large number of different types of mixed Lie algebras with chain lattice of ideals, even over algebraically closed fields. The easier example is given by any split extension of a simple Lie algebra and a nontrivial irreducible module (abelian nilradical and $3$-chain). In characteristic zero, the well-known Levi Theorem lets us split a mixed chained-lattice Lie algebra as $\fs\oplus_\rho \fn$ where $\fs$ is a simple subalgebra and their radical $\fn$ is nilpotent. The chain ideal condition forces strong patterns on the faithful representation $\rho\colon \fs\to \der \fn$ and positively gradings on the nilradical are found.

In this paper we develop computer algorithms which let us build mixed Lie algebras whose ideals form a $n$-chain for $n\leq 6$. These algorithms will produce parametric families of quasi-cyclic or Carnot graded Lie algebras of arbitrary dimension and nilpotent index up to $5$. The algebras are easily described by taking basis and defining their respective structure constants. These algebras are quotient of free nilpotent algebras by homogeneous ideals and admit expanding and partially expanding automorphisms (check~\cite{dere2017gradings}). Starting with the split $3$-dimensional Lie algebra $\mathfrak{sl}_2(\mathbb{K})$ and using its irreducible representations $V_n$ and some suitable $\mathfrak{sl}_2(\mathbb{K})$-invariant bilinear products $V_n\otimes V_m\to V_{m+n-2k}$ that appeared in \cite{Dixmier_1984}, we propose a block construction of Lie algebras with chain ideal lattices. These algorithms are inspired by \cite{Dixmier_1984} and \cite{bremner2004invariant}, and their specifications are presented at the end of Subsection~\ref{sub:algorithm}. Previously, Section~\ref{s:generalities} and Section~\ref{s:algorithm} exhibit many more examples and introduce generalities and tools that are the theoretical environment of the algorithms. In Section~\ref{s:lemmas}, we have included several results on existence induced by applying the mentioned algorithms. The existence results produce nilpotent positively $\mathbb{Z}$-graded Lie algebras that support additional structures as it is explain in the final comments in Section~\ref{FC} (invariant metrics, expanding automorphisms and left-symmetric structures among others).

Basic results on Lie algebras follow from \cite{Jacobson_1979} and \cite{Humphreys_1997}. Along this paper, all vector spaces are of finite dimension over a field~$\ku$ of characteristic zero unless otherwise stated.

\section{Generalities and examples}\label{s:generalities}

A Lie algebra $\fg$ is a vector space over a field $\mathbb{K}$ endowed with a binary skew-symmetric ($\frac{1}{2}\in \mathbb{K}$) bilinear product $[x,y]$ satisfying the Jacobi identity:
\begin{equation}\label{eq:Jacobi}
J(x,y,z)=[[x,y],z]+[[z,x],y]+[[y,z],x]=0\quad \forall\, x,y,z\in \fg.
\end{equation}
In case $[x,y]=0$ for every $x,y\in \fg$, the Lie algebra $\fg$ is called \emph{abelian} and identity~\eqref{eq:Jacobi} becomes trivial. Along this paper, we will denote as the Lie bracket of two vector subspaces $U,V$ of $L$ to the whole linear span
\begin{equation*}
[U,V]=\spa\langle [u,v]: u\in U, v\in V\rangle.
\end{equation*}
A subspace $U$ of $\fg$ is subalgebra (or ideal) of $\fg$ if $[U,U] \subseteq U$ (or $[\fg,U] \subseteq U$). The derived series of $\fg$ is defined recursively as $\fg^{(1)}=\fg$ and $\fg^{(n)}=[\fg^{(n-1)},\fg^{(n-1)}]$ for $n>1$. While the lower central series (LCS) is defined as $\fg^1=\fg$ and $\fg^n=[\fg,\fg^{n-1}]$ for $n>1$. The terms $\fg^i$ and $\fg^{(i)}$ are ideals of $\fg$. If the derived series (or LCS) vanishes, $\fg$ is called \emph{solvable} (or \emph{nilpotent}). The \emph{solvable radical} (or \emph{nilpotent radical}) of $\fg$, denoted as $\fr(\fg)$ (or $\fn(\fg)$) is the biggest solvable (or nilpotent) ideal of $\fg$. We will also denote these ideals as $\fr$ (or $\fn$). The nilindex or index of nilpotency of a nilpotent Lie algebra is the smallest integer such $t$ such that $\fn^t=0$. Along this paper we refer to nilpotent Lie algebras of nilindex $t$ as $t$-nilpotent algebras or $(t-1)$-step nilpotent. And the  $(t-1)$-tuple $(c_1,\dots, c_{t-1})$ in which the $i^{th}$ component is $c_i=\dim n^{i}/n^{i+1}$ will be called the \emph{general type  of $\fn$} and $c_1$ will also be termed \emph{type of $\fn$}.

A \emph{semisimple} Lie algebra is by definition an algebra with no non-zero solvable ideals, and it decomposes as a direct sum of ideals which are simple Lie algebras. A \emph{simple} Lie algebra is a non-abelian Lie algebra that contains no nonzero proper ideals. Levi's Theorem asserts that a Lie algebra $\fg$ decomposes as a direct sum $\fg=\fs\oplus \fr$ where $\fs$ is a semisimple subalgebra (Levi subalgebra). In this paper, a \emph{mixed Lie algebra} is a non solvable and non semisimple algebra.

A derivation of $\fg$ is a linear map $d\colon\fg\to \fg$ such that $d[x, y] = [d(x), y]+[x, d(y)]$. $\der \fg$, the whole set of derivations of $\fg$, is a Lie subalgebra of the general Lie algebra $\fgl(\fg)$, the linear maps of $\fg$ under the commutator bracket $[f,g]=fg-gf$. For any $x\in \fg$, the map $\ad x (y)=[x,y]$ is a derivation called \emph{inner derivation}. The general Lie algebra, $\fgl(V)$ (or $\fgl(V,\ku)$ or $\fgl_n(\ku)$ for matrix description) can be defined for any $\ku$-vector space $V$ and the set of traceless linear maps, $\fsl(V)$ is a Levi subalgebra.

For a vector space $V$, a representation of a Lie algebra $\fg$ is an homomorphism of Lie algebras $\rho\colon \fg\to\mathfrak{gl}(V)$. The vector space $V$ under the action $x\,\cdot\,v=\rho(x)(v)$ is called $\fg$-module. By taking $\rho=\id$, the vector $V$ is a \emph{natural module} for any subalgebra $\fs$ of $\fgl(V)$, so $\rho(f)(v)=f(v)$. The adjoint representation $\rho=\mathbb{\ad}$ of $\fg$ is $V=\fg$ and $\rho(x)(y)=[x,y]$.  A module $V$ is irreducible if it is nontrivial and does not contain proper submodules. In case the kernel of $\rho$ is trivial, $V$ is said to be a \emph{faithful} $\fg$-module. Any representation of any semisimple Lie algebra is completely reducible. New modules can be obtained from old ones: as a quotient of a module by a submodule in the natural way or as a tensor product $V\otimes W$ (or $\Lambda^n V$ and $S^nV$) of modules by declaring
\begin{equation*}
	x\cdot(v\otimes w)=(x\cdot v)\otimes w+v\otimes(x\cdot w).
\end{equation*}

A \emph{naturally graded} (respectively positive naturally graded) algebra or $\mathbb{N}$-graded (respectively $\mathbb{N}^+$-graded) algebra, is a Lie algebra $\fg$ that admits a nontrivial grading on the set of natural numbers $\mathbb{N}$ including $0$ (respectively excluding $0$).
\begin{definition}
    A finite-dimensional quasi-cyclic (homogeneous or Carnot) Lie algebra is a positive naturally graded algebra $\fg=\fg_1\oplus\fg_2\oplus \dots \oplus \fg_t$ generated as an algebra by $\fg_1$. This means $[\fg_i,\fg_j]\subseteq \fg_{i+j}$ (here $\fg_{s}=0$ for $s>t$) and $\fg_i=[\fg_1,\fg_1^{i-1}]$.
\end{definition}
\noindent Following \cite[Definition 3.3]{cornulier2016gradings} the terms quasi-cyclic, graded or homogeneous are better known in Lie algebras, while the word Carnot (graded) is more commonly used in sub-Riemannian and conformal geometry.

The set of ideals of a Lie algebra $L$ form a lattice as they are a poset (partially ordered set) ordered by inclusion, and given two elements we have an infimum (intersection) and supremum (sum). Lattices of ideals of Lie algebras are modular, and distributive if we impose finiteness.  A finite lattice can be represented through its Hasse diagram, which is a graph where nodes are the ideals and there is a link between two ideals when they are contained and there is no other ideal between. A modular lattice (or distributive) does not contain pentagons (or diamonds) as sublattices.

\begin{figure}[!ht]
	\centering
		\begin{tikzpicture}
			\node (Z1) at (0,0) {};
			\node (L1) at (-0.8, 1) {};
			\node (L1T) at (-1.07,1) {$c$};
			\node (R1) at (0.8, 0.66) {};
			\node (R1T) at (1.07,0.66) {$b$};
			\node (R2) at (0.8, 1.33) {};
			\node (R2T) at (1.07,1.33) {$a$};
			\node (H1) at (0,2) {};
	
			\filldraw [black] (Z1) circle (1.5pt);
			\filldraw [black] (L1) circle (1.5pt);
			\filldraw [black] (R1) circle (1.5pt);
			\filldraw [black] (R2) circle (1.5pt);
			\filldraw [black] (H1) circle (1.5pt);
			
			\draw (Z1) -- (L1) -- (H1);
			\draw (Z1) -- (R1) -- (R2) --(H1);
		\end{tikzpicture}	
		\hspace{1cm}
		\begin{tikzpicture}
			\node (Z2) at (0,0) {};
			\node (C1) at (-0.8, 1) {};
			\node (C2) at (0, 1) {};
			\node (C3) at (0.8, 1) {};
			\node (C1T) at (-1.07, 1) {$a$};
			\node (C2T) at (-0.27, 1) {$b$};
			\node (C3T) at (1.07, 1) {$c$};
			\node (H2) at (0,2) {};
	
			\filldraw [black] (Z2) circle (1.5pt);
			\filldraw [black] (C1) circle (1.5pt);
			\filldraw [black] (C2) circle (1.5pt);
			\filldraw [black] (C3) circle (1.5pt);
			\filldraw [black] (H2) circle (1.5pt);
			
			\draw (Z2) -- (C1) -- (H2);
			\draw (Z2) -- (C2) -- (H2);
			\draw (Z2) -- (C3) -- (H2);
		\end{tikzpicture}
	\caption{Pentagon and diamond lattices.}
	\label{fig:nonlattices}
\end{figure}
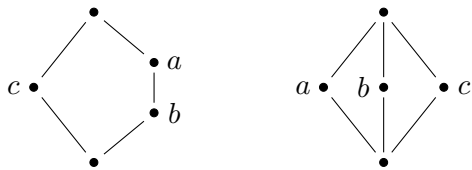
In Figure~\ref{fig:nonlattices} we have which sublattices must not appear in case we want a modular and distributive lattice, whereas in Figure~\ref{fig:latticesExamples} we can find some examples of lattices of ideals of Lie algebras.

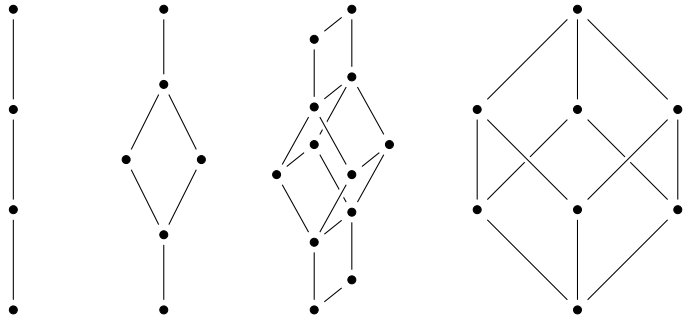
\begin{figure}[!ht]
	\centering
	\newcommand{\hd}{4}
	\newcommand{\drp}{0}
	\newcommand{\dcp}{2}
	\newcommand{\dhp}{4}
	\newcommand{\dbp}{7.5}
	\newcommand{\ax}{0.5}
	\newcommand{\ay}{0.4}
	\newcommand{\hdn}{3.6}
	\newcommand{\anchCubo}{4/3}
	\newcommand{\altoCubo}{4/3}
	\newcommand{\sepYCubo}{4/3}
	\newcommand{\psize}{1.5pt}
	\begin{tikzpicture}
			\node (DR4)  at (\drp,    \hd) {};
			\node (DR3)  at (\drp,    \hd*2/3) {};
			\node (DR2)  at (\drp,    \hd*1/3) {};
			\node (DR1)  at (\drp,    0) {};
			
			\filldraw [black] (DR4) circle (\psize);
			\filldraw [black] (DR3) circle (\psize);
			\filldraw [black] (DR2) circle (\psize);
			\filldraw [black] (DR1) circle (\psize);
			
			\draw (DR1) -- (DR2) -- (DR3) -- (DR4);
		
			\node (DC5)  at (\dcp,    \hd) {};
			\node (DC4)  at (\dcp,    \hd*3/4) {};
			\node (DC32) at (\dcp-0.5,\hd*2/4) {};
			\node (DC31) at (\dcp+0.5,\hd*2/4) {};
			\node (DC2)  at (\dcp,    \hd*1/4) {};
			\node (DC1)  at (\dcp,    0) {};
			
			\filldraw [black] (DC5)  circle (\psize);
			\filldraw [black] (DC4)  circle (\psize);
			\filldraw [black] (DC32) circle (\psize);
			\filldraw [black] (DC31) circle (\psize);
			\filldraw [black] (DC2)  circle (\psize);
			\filldraw [black] (DC1)  circle (\psize);
			
			\draw (DC1) -- (DC2) -- (DC31) -- (DC4) -- (DC5);
			\draw (DC2) -- (DC32) -- (DC4);
		
			\node (DH5)  at (\dhp,    \hdn) {};
			\node (DH4)  at (\dhp,    \hdn*3/4) {};
			\node (DH32) at (\dhp-0.5,\hdn*2/4) {};
			\node (DH31) at (\dhp+0.5,\hdn*2/4) {};
			\node (DH2)  at (\dhp,    \hdn*1/4) {};
			\node (DH1)  at (\dhp,    0) {};
			
			\node (D2H5)  at (\dhp+\ax,    \hdn+\ay) {};
			\node (D2H4)  at (\dhp+\ax,    \hdn*3/4+\ay) {};
			\node (D2H32) at (\dhp-0.5+\ax,\hdn*2/4+\ay) {};
			\node (D2H31) at (\dhp+0.5+\ax,\hdn*2/4+\ay) {};
			\node (D2H2)  at (\dhp+\ax,    \hdn*1/4+\ay) {};
			\node (D2H1)  at (\dhp+\ax,    \ay) {};
			
			\filldraw [black] (DH5)  circle (\psize);
			\filldraw [black] (DH4)  circle (\psize);
			\filldraw [black] (DH32) circle (\psize);
			\filldraw [black] (DH31) circle (\psize);
			\filldraw [black] (DH2)  circle (\psize);
			\filldraw [black] (DH1)  circle (\psize);
			\filldraw [black] (D2H5)  circle (\psize);
			\filldraw [black] (D2H4)  circle (\psize);
			\filldraw [black] (D2H32) circle (\psize);
			\filldraw [black] (D2H31) circle (\psize);
			\filldraw [black] (D2H2)  circle (\psize);
			\filldraw [black] (D2H1)  circle (\psize);
			
			\draw (D2H1) -- (D2H2) -- (D2H31) -- (D2H4) -- (D2H5);
			\draw (D2H2) -- (D2H32) -- (D2H4);
			\draw [white,line width=1mm] (DH2)--(DH31)--(DH4);
			\draw (DH1) -- (DH2) -- (DH31) -- (DH4) -- (DH5);
			\draw (DH2) -- (DH32) -- (DH4);

			\draw (DH5)  -- (D2H5);
			\draw (DH4)  -- (D2H4);
			\draw (DH32) -- (D2H32);
			\draw (DH31) -- (D2H31);
			\draw (DH2)  -- (D2H2);
			\draw (DH1)  -- (D2H1);
			
		
			\node (Z1) at (          \dbp,         0) {};
			\node (I1) at (\dbp-\anchCubo, \altoCubo) {};
			\node (D1) at (\dbp+\anchCubo, \altoCubo) {};
			\node (L1) at (          \dbp, \altoCubo*2) {};
			
			\node (Z2) at (          \dbp,\sepYCubo) {};
			\node (I2) at (\dbp-\anchCubo,\sepYCubo+\altoCubo) {};
			\node (D2) at (\dbp+\anchCubo,\sepYCubo+\altoCubo) {};
			\node (L2) at (          \dbp,\sepYCubo+\altoCubo*2) {};
	
			\filldraw [black] (Z1) circle (\psize);
			\filldraw [black] (Z2) circle (\psize);
			\filldraw [black] (I1) circle (\psize);
			\filldraw [black] (I2) circle (\psize);
			\filldraw [black] (D1) circle (\psize);
			\filldraw [black] (D2) circle (\psize);
			\filldraw [black] (L1) circle (\psize);
			\filldraw [black] (L2) circle (\psize);
			
			\draw (Z1) -- (I1) -- (L1);
			\draw (Z1) -- (D1) -- (L1);
			\draw [white,line width=1mm] (D2)--(Z2)--(I2);
			\draw (Z2) -- (I2) -- (L2);
			\draw (Z2) -- (D2) -- (L2);
			\draw (Z1) -- (Z2);
			\draw (D1) -- (D2);
			\draw (I1) -- (I2);
			\draw (L1) -- (L2);
	\end{tikzpicture}
	
	\caption{Examples of lattices of Lie algebras.}
	\label{fig:latticesExamples}
\end{figure}

Note, in Figure~\ref{fig:latticesExamples} we can find a chain of ideals which is the  lattice in which we are focus through this paper. It is worth mentioning that a Lie algebra determines a unique lattice, but the contrary does not hold. This is why this chain lattice in Figure~\ref{fig:latticesExamples} is associated to different mixed algebras, or even to the solvable oscillator Lie algebra in the real field. This is a 4-dimensional Lie algebra which can be seen as
	\begin{equation*}
		\mathfrak{d}_4 = \spa_\mathbb{K}\langle d \rangle \oplus \mathfrak{h}_3
	\end{equation*}
	where $\mathfrak{h}_3$ is the Heisenberg algebra with basis $\{x,y,z\}$. Thus, we have
	\begin{alignat*}{3}
		[x,y] &= z,\qquad\quad\enspace  & [x,z] &= 0,\qquad\qquad  & [y,z]&=0,\\
		[d,x] &= y,& [d,y] &= -x, &[d,z]&=0.
	\end{alignat*}
	Note, this same oscillator algebra, in the complex field, produces the second lattice of ideals in Figure~\ref{fig:latticesExamples}. Also, we can see how the third lattice comes from a sort of duplication of the second one. This duplication can be achieved, for example, by a direct sum of a simple Lie algebra as a trivial extension. Indeed, this also explains why $\mathfrak{sl}_2(\ku) \oplus \mathfrak{sl}_2(\ku) \oplus \mathfrak{sl}_2(\ku)$ has by its lattice the fourth one in the same figure, as it comes by duplicating two times the chain of two ideals. We also note that the direct sum as ideals of a Lie algebra with chain lattice by a simple one produces a \emph{ladder lattice}. The deconstruction of the lattices in Figure~\ref{fig:latticesExamples} into chains stated by Dilword's Theorem is clear.
	
	In the sequel we will show that chained-lattice mixed Lie algebras are a very large family.

According to \cite{gauger_1973}, any nilpotent Lie algebra $\fn$ of $(t+1)$-nilindex and minimal generator vector space $\fm$ of dimension $d$ (type $d$) is a quotient of the free nilpotent algebra $\fn_{d,t}$ on a set of $d$ generators by an ideal $I$ such that $\fn_{d,t}^t\nsubseteq I\subseteq \fn_{d,t}^2$. Up to isomorphisms, the Levi subalgebra of the derivation Lie algebra $\der \fn_{d,t}$ is $\fsl_d(\ku)$. By using multilinear algebra, models of free nilpotent algebras of nilindex $3$ and $4$ are easily obtained:
\begin{alignat*}{2}
    \fn_{d,2}&=\fm \oplus \Lambda^2 \fm,& [u,v]&=u\wedge v,\\
    \fn_{d,3}&=\fm \oplus \Lambda^2\fm\oplus \frac{\fm \otimes \Lambda^2\fm}{\Lambda^3\fm},\qquad & [u,v\wedge w]&=u\otimes v\wedge w\ \mod \Lambda^3\fm.
\end{alignat*}
Any linear map $f\colon \fm \to \fm$ extends to a derivation $d_f$ of $\fn_{d,t}$. The set of such extension maps that have zero trace is just the Levi subalgebra of $\der \fn_{d,t}$ that we also denote as $\mathfrak{sl}_d(\fm, \ku)$ (for short $\fsl_d(\ku)$ or $\fsl(\fm)$). As $\mathfrak{sl}(\fm)$-modules, for $d\geq 3$, $\fm=V(\lambda_1)$,  $\Lambda^2\fm=V(\lambda_2)$ and $\Lambda^3\fm=\ku$ if $d=3$ and $\Lambda^3\fm=V(\lambda_3)$ if $d\geq 4$. In addition,
\begin{equation*}
\frac{\fm \otimes \Lambda^2\fm}{\Lambda^3\fm}=V(\lambda_1+\lambda_2).
\end{equation*}
Denoting by $\rho_1$ and $\rho_2$ the natural representations of $\mathfrak{sl}(\fm)$ on $\fn_{d,2}$ and $\fn_{d,3}$, we arrive at the series of mixed Lie algebras with $4$-chain and $5$-chain ideals:
\begin{equation}\label{eq:n2t-4-5-cadena}
    \mathfrak{sl}(\fm)\oplus_{\rho_1} \fn_{d,2}\quad \text{and}\quad \mathfrak{sl}(\fm)\oplus_{\rho_2} \fn_{d,3}.
\end{equation}
\begin{example}\label{sl-low}
The smallest Lie algebras in equation~\eqref{eq:n2t-4-5-cadena} correspond to a vector space $\fm$ of dimension $2$. In this case, we get algebras of dimension $6$ and $8$ with $\mathfrak{sl}_2(\ku)$-irreducible decomposition $V_2\oplus V_1\oplus V_0$ and $V_2\oplus V_1\oplus V_0\oplus V_1$. Here $V_n$ is the $(n+1)$-dimensional irreducible module of $\mathfrak{sl}_2(\ku)$ and  $V_2$ is just the adjoint module of  $\mathfrak{sl}_2(\ku)$ (see next section for a complete description of the algebras using basis and bracket product).
\end{example}

\begin{example}\label{so-chain}
From the set of skew-maps of a fixed vector space $\fm$ relative to a bilinear and non-degenerate form $\varphi$, either symmetric or skew-symmetric, we get classical simple Lie algebras $\mathfrak{so}(\fm, \varphi)$ (types $B$ or $D$ if $\varphi$ is symmetric and depending on whether $\dim \fm$ is odd or even) and $\mathfrak{sp}(\fm, \varphi)$ (type $C$ if $\varphi$ is skew and $\dim \fm$ is even). The natural module $\fm=V(\lambda_1)$ of $\mathfrak{so}(\fm, \varphi)$ ($\dim \fm \geq 7$ is required for tensor decompositions) provides  the representation $\rho_i$, (it is the restricted representation of the one given in equation~\eqref{eq:n2t-4-5-cadena}). Then, $\Lambda^2\fm=V(\lambda_2)$ and
\begin{equation*}
   Z(\fn_{d,3})=\frac{\fm \otimes \Lambda^2\fm}{\Lambda^3\fm}=V(\lambda_1+\lambda_2)\oplus V(\lambda_1), 
\end{equation*}
and we get the series of Lie algebras of seven ideals
    \begin{equation*}
    \mathcal{L}(\fm)=\mathfrak{so}(\fm, \varphi)\oplus_{\rho_2} \fn_{d,3}.
    \end{equation*}
The lattice of ideals of $\mathcal{L}(\fm)$ is a 4-element chain connected by the rhombus ideal $Z(\fn_{d,3})$ at the bottom. But the quotient Lie algebras by minimal ideals inside $Z(\fn_{d,3})$,
\begin{equation*}
    \frac{\mathcal{L}(\fm)}{V(\lambda_1)}\quad \text{and}\quad  \frac{\mathcal{L}(\fm)}{V(\lambda_1+\lambda_2)},
\end{equation*}
are $5$-chain mixed Lie algebras. In both cases we get the $5$-chain by removing a minimal node in the complete lattice of ideals of $\mathcal{L}(\fm)$.
\end{example}

\begin{example}
For any $n\geq 1$, $\mathfrak{h}_n$ denotes the $n^{th}$ generalised Heisenberg Lie algebra of dimension $2n+1$. This algebra has a \emph{standard basis} $e_1,\dots, e_n,$ $e_{n+1}, \dots e_{2n}, z$ with nonzero brackets $[e_i,e_{n+i}]=z$. The vector space $\fm=\spa\langle e_1,\dots, e_n,e_{n+1}, \dots e_{2n}\rangle$ is endowed with the non-degenerate skew-form $[a,b]=\varphi(a,b)z$. According to \cite[Example~1]{benito2013levi}), the Levi factor of $\der \mathfrak{h}_n$ is the Lie algebra $\fs$ of extended maps $d_f$ where and $d_f(z)=0$ and $d_f|_{\fm}=f$ for every $f\in \mathfrak{sp}(\fm, \varphi)$. So $\fs \cong \mathfrak{sp}(\fm, \varphi)$ and $\mathfrak{h}_n$ decomposes as the natural $\mathfrak{sp}(\fm, \varphi)$-module $\fm=V(\lambda_1)$, and the trivial one-dimensional module $Z(\mathfrak{h}_n)=\ku\cdot z$. Clearly, the ideals of the Lie algebra $\fs\oplus_{id}\mathfrak{h}_n$ (a mixed subalgebra of the mixed algebra $\der \mathfrak{h}_n\oplus_{id}\mathfrak{h}_n$) form a 4-chain. For $n=1$, since $\mathfrak{sp}_2(\ku)\cong \fsl_2(\ku)$, previous $4$-chain Lie algebra is encoded in a Lie structure $V_2\oplus V_1\oplus V_0$ in Example~\ref{sl-low}. Here $V_1\oplus V_0$ is just $\mathfrak{h}_1$. According to \cite[Example~6.6]{kharraf2021classification}, for any $n\geq 1$, the Heisenberg algebra $\mathfrak{h}_n$ can be endowed with a structure of a simple \emph{Hom-Lie algebra}. For the whole description of Hom-Lie structures on $\mathfrak{h}_1$ see \cite{alvarez2019cohomology}.
\end{example}

\begin{example}
The tensor product $S\otimes A$ of a Lie algebra $S$ by a commutative and associative algebra $A$ produces a Lie algebra named in the literature \emph{current Lie algebra of $S$ by $A$}. \emph{Poisson structures} and invariant bilinear forms on current Lie algebras are treated in \cite[Theorem~2,  Corrollary~2.2 and Lemma~2.3]{zusmanovich2014compendium}. If $A$ has unit, a copy of $S$ appears as subalgebra of $S\otimes A$. If $S$ is simple, the ideals of $S\otimes A$ are of the form $S\otimes I$ where $I$ is an ideal of $A$. Since the Killing form is an invariant and non-degenerate form of any simple Lie algebra, from Lemma 2.3 in \cite{zusmanovich2014compendium}, the current Lie algebra $S\otimes A$ can also be endowed with an invariant symmetric and nondegenerate bilinear form. In this way we get a \emph{metric Lie structure} (\emph{quadratic or metrizable Lie algebra}). Consider now the series of current algebras 
\begin{equation*}
    \fg_n(S)=S \otimes \frac{\ku[t]}{\spa\langle t^n\rangle}
\end{equation*}
for $S$ a simple Lie algebra. Note, the block $\fs=S\otimes 1$ is a Levi subalgebra of $\fg_n$, and the solvable radical, $\fr(\fg_n)=\fn(\fg_n)=\oplus_{i=1}^{n-1}S\otimes x^i$, is a \emph{positive naturally graded} Lie algebra (here $x$ is the class of the element $t^i$ mod $\spa\langle t^n\rangle$) generated by $S\otimes x$. So, $\fn(\fg_n)$ is Carnot and the whole algebra $\fg_n$ is also naturally graded. As $\fs$-module, $\fg_n$ decomposes as the direct sum of $n$ copies of the adjoint module of $S$ and its lattice of ideals is a $(n+1)$-element chain. In addition, $\fg_n$ is a \emph{quadratic Lie algebra}. The smallest algebras appear by taking $S=\fsl_2(\ku)$. The $\fsl_2(\ku)$-module decomposition of $\fg_n(\fsl_2(\ku))$ is $V_2\oplus\dots \oplus V_2$ ($n$ summands).
\end{example}

\section{Theoretical support, tools and algorithms}\label{s:algorithm}

The anticonmutivity and Jacobi identity are the identities that determine any Lie algebra $\fg$. The first one is equivalent to say that the product $[x,y]$ on $\fg$ in is given by a bilinear map $\Lambda^2 \fg\to \fg$; while the latest is equivalent to state that the right multiplication $\ad x$ is a derivation of $\fg$, for every $x\in \fg$. If $\fg$ is simple, it is irreducible as adjoint module and a copy of $\fg$ is inside $\Lambda^2 \fg$. Reversing and generalising this argument, for an irreducible representation $\rho$ of a semisimple Lie algebra $\fs$ over a vector space $V$, the existence of a copy of $V$ inside $\Lambda^2V$ let us define an skew-product $\star\colon V\otimes V \to V$ such that $\rho(\fs)\subseteq \der (V, *)$. This induces naturally a Lie structure on the vector space $\fs\oplus_\rho V$. Along this section, we follow this idea in order to get Lie algebras with chain ideal lattices.

Our algorithms to give the desired Lie structure are based on the representation theory of $\fsl_2(\mathbb{K})$ and the use of transvections to express skew-products, and the structure results given in~\cite[Theorem~2.2]{pilar1992lie} and \cite[Theorem~2]{Snobl_2010}. Both theorems can be found below.
\begin{theorem}[Benito, 1992]\label{thm:basiconPi}
Let $\fg$ be a mixed Lie algebra. Then, the ideals of $\fg$ are in chain if and only if $\fg$ is a simple Lie algebra  or a direct sum of a nonzero nilpotent ideal $\fn$ and a simple algebra $\fs$ such that $\fn/\fn^2$ is a faithful $\fs$-module and $\fn^j/\fn^{j+1}$ are irreducible $\fs$-modules for $j\geq 1$. In that case, if $t$ is the nilindex of $\fn$, the ideals of $\fg$ are the $(t+1)$-element chain $0=\fn^t\subsetneq \fn^{t-1}\subsetneq \dots \subsetneq \fn^i\subsetneq \dots \subsetneq \fn\subsetneq \fg$.
\end{theorem}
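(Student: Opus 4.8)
\emph{Plan of proof.} Since $\fg$ is assumed mixed it is not semisimple, so the ``simple'' alternative in the statement cannot occur (a semisimple algebra with chain ideal lattice is automatically simple, as a sum of two or more simple ideals has incomparable ideals). Thus the real content is the equivalence: $\fg$ has a chain ideal lattice if and only if $\fg=\fs\oplus\fn$ with $\fs$ simple, $\fn\neq0$ nilpotent, $\fn/\fn^2$ faithful and each $\fn^j/\fn^{j+1}$ irreducible over $\fs$; together with the description of the ideals as $0=\fn^t\subsetneq\cdots\subsetneq\fn\subsetneq\fg$ in that case. I would prove the two directions separately, the ``if'' direction also yielding the explicit lattice.

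For sufficiency, assume the structural hypotheses. A short induction using that $\fn$ is an ideal gives $[\fg,\fn^j]\subseteq\fn^j$, so the lower central series of $\fn$ consists of ideals of $\fg$ and produces the displayed chain (the top inclusion being proper since $\fg/\fn\cong\fs\neq0$). It remains to see nothing else occurs. For an ideal $I$, its image in $\fg/\fn\cong\fs$ is an ideal of the simple algebra $\fs$, hence $0$ or $\fs$. If it is $0$, then $I\subseteq\fn$, so $I$ is an $\fs$-submodule of $\fn$ and also an ideal of $\fn$ (because $[\fn,I]\subseteq[\fg,I]\cap\fn\subseteq I$); if $I\neq0$, pick the largest $k$ with $I\subseteq\fn^k$, use irreducibility of $\fn^k/\fn^{k+1}$ to get $I+\fn^{k+1}=\fn^k$, and then iterate $\fn^{k+1}=[\fn,\fn^k]\subseteq[\fn,I]+\fn^{k+2}\subseteq I+\fn^{k+2}$ to conclude $I+\fn^{k+m}=\fn^k$ for every $m$, i.e. $I=\fn^k$. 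If instead the image is $\fs$, then $I$ contains a Levi subalgebra which, being of the same dimension as $\fs$, is a Levi subalgebra of $\fg$; by conjugacy of Levi subalgebras under inner automorphisms stabilising each $\fn^j$, I may assume $I=\fs\oplus J$ with $J=I\cap\fn$; then $J$ is an ideal of $\fg$ inside $\fn$, hence $J=\fn^k$ for some $k$ or $J=0$, while $[\fn,\fs]\subseteq[\fg,I]\cap\fn\subseteq J$; but faithfulness of $\fn/\fn^2$ ($[\fs,\fn]\not\subseteq\fn^2$) together with its irreducibility give $[\fs,\fn]+\fn^2=\fn$, so $J\not\subseteq\fn^2$, forcing $J=\fn$ and $I=\fg$.

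For necessity, let $\fg$ be mixed with chain ideal lattice. By Levi, $\fg=\fs\oplus\fr$ with $\fr=\fr(\fg)\neq0$ ($\fg$ not semisimple) and $\fr\neq\fg$ ($\fg$ not solvable), so $\fs\neq0$. Were $\fs$ a sum of two or more simple ideals, their preimages in $\fg$ under $\fg\to\fg/\fr\cong\fs$ would be incomparable ideals, so $\fs$ is simple. Since $[\fg,\fr]$ and $[\fr,\fr]$ are nilpotent ideals they lie in $\fn:=\fn(\fg)\neq0$, and $[\fg,\fr]\subseteq\fn$ makes the image of $\fr$ central in $\fg/\fn$; hence $\fg/\fn\cong\fs\oplus(\fr/\fn)$ as a direct sum of ideals, and a nonzero abelian summand $\fr/\fn$ would give the quotient $\fg/\fn$ (whose ideal lattice is an interval of the chain of $\fg$, hence a chain) two incomparable ideals, so $\fr=\fn$. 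Complete reducibility of $\fs$-modules then forces each $\fn^j/\fn^{j+1}$ irreducible: a proper decomposition would, via preimages in $\fn^j$ (ideals of $\fg$, since $[\fn,\fn^j]=\fn^{j+1}$ and the summands are $\fs$-submodules), give incomparable ideals. Finally, if $\fn/\fn^2$ were not faithful then $[\fs,\fn]\subseteq\fn^2$ (as $\fs$ is simple), whence $[\fs,\fn^j]\subseteq\fn^{j+1}$ for all $j$ and $\fs\oplus\fn^2$ would be an ideal of $\fg$ incomparable with $\fn$, impossible.

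I would invoke without proof the routine inductions and the standard facts about radicals, nilradicals and conjugacy of Levi factors. I expect the genuine obstacle to be the sufficiency analysis of ideals that surject onto $\fs$: pinning these down needs conjugacy of Levi subalgebras and, decisively, the faithfulness of $\fn/\fn^2$, which is precisely what prevents a proper mixed ideal from sitting strictly between $\fn$ and $\fg$.
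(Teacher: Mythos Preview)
The paper does not supply its own proof of this theorem: it is quoted verbatim as \cite[Theorem~2.2]{pilar1992lie} and used as a black box, so there is nothing to compare against. Your argument is a correct, self-contained proof of the result. Both directions are sound: the classification of ideals contained in $\fn$ via the largest $k$ with $I\subseteq\fn^k$ and the bootstrap $\fn^{k+1}\subseteq I+\fn^{k+2}$ is exactly right, and the necessity direction correctly deduces $\fr=\fn$ from the chain condition on $\fg/\fn$ and extracts irreducibility and faithfulness from the absence of incomparable ideals.

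One remark on the step you flagged as the ``genuine obstacle'': the appeal to Mal'cev conjugacy is valid (the conjugating automorphism lies in $\exp(\ad\fn)$ and therefore fixes every ideal, including $I$), but it can be bypassed. If $I$ surjects onto $\fs$ then $I+\fn=\fg$, and $J:=I\cap\fn$ is an ideal of $\fg$ contained in $\fn$, hence $J=\fn^k$ for some $k\ge1$ or $J=0$ by what you already proved. Then
\[
[\fs,\fn]\subseteq[\fg,\fn]=[I+\fn,\fn]=[I,\fn]+\fn^2\subseteq J+\fn^2,
\]
so if $k\ge2$ (or $J=0$) we get $[\fs,\fn]\subseteq\fn^2$, contradicting the faithfulness and irreducibility of $\fn/\fn^2$ exactly as in your argument; hence $k=1$ and $I=\fg$. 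This avoids invoking conjugacy of Levi subalgebras altogether.
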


In order to obtain a Lie algebra as described in previous theorem we need a triad $(\fs,\fn,\rho)$ where $\fs$ is simple, $\fn$ nilpotent and $\rho\colon \fs\to \der \fn$. So $\fn=m_1\oplus m_2\oplus \dots \oplus m_t$ is a direct sum of irreducible $\fs$-modules $m_i\cong \fn^i/\fn^{i+1}$. The nilpotency of $\fn$ makes the construction easier because of the terms in the lower central series are characteristic ideals (i.e., $\fn^i$ is $\der \fn$-invariant for all $i$) and $\fn$ is generated by any subspace $V$ such that $\fn=V\oplus \fn^2$. We also note that $\fg=\fs\oplus_\rho \fn$ is indecomposable, so $\rho$ is faithful. Even more:

\begin{theorem}[Snobl, 2010]\label{thm:basiconLevi}
Let $\fg$ be an indecomposable Lie algebra with product $[x,y]$, nilpotent radical $\fn$ of $(t+1)$-nilindex and nontrivial Levi decomposition $\fg=\fs\oplus \fn$ for some semisimple Lie algebra $\fs$. Then, there exists a decomposition of $\fn$ into a direct sum of $\fs$-modules.  
\begin{equation*}
	\fn=m_1\oplus m_2\oplus \dots \oplus m_t
\end{equation*}
where $\fn^j=m_j\oplus \fn^{j+1}$, $m_j\subseteq [m_1,m_{j-1}]$ such that $m_1$ is a faithful $\fs$-module and for $2\leq j\leq t$, $m_j$ decomposes into a sum of some subset of irreducible components of the tensor representation $m_1\otimes m_{j-1}$.
\end{theorem}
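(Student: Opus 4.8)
The plan is to exploit that $\fs$ is semisimple and acts on $\fn$ by derivations over a field of characteristic zero, so that $\fn$ and all terms of its lower central series are governed by Weyl's complete reducibility theorem, and then to build the $m_j$ by choosing $\fs$-module complements along the lower central series compatibly with the brackets. To set up: since $\fn$ is an ideal, restriction of the adjoint action gives a homomorphism $\rho\colon \fs\to\der\fn$, $\rho(x)=\ad x|_{\fn}$, and each $\fn^j$, being a characteristic ideal of $\fn$, is $\rho(\fs)$-invariant; hence $\fn=\fn^1\supseteq\fn^2\supseteq\dots\supseteq\fn^t\supsetneq\fn^{t+1}=0$ is a filtration by $\fs$-submodules. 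Because $\ku$ has characteristic zero and $\fs$ is semisimple, every finite-dimensional $\fs$-module is completely reducible, so each extension in this filtration splits; in particular $\fn\cong\bigoplus_{j=1}^{t}\fn^j/\fn^{j+1}$ as $\fs$-modules. Finally, since $\rho(x)$ is a derivation, the bracket $\fn\times\fn\to\fn$ is $\fs$-equivariant, hence induces $\fs$-module maps $\Lambda^2\fn\to\fn$ and, by restriction, $m\otimes m'\to\fn$ for any $\fs$-submodules $m,m'\subseteq\fn$.

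I would then construct the $m_j$ inductively. Let $m_1$ be any $\fs$-module complement of $\fn^2$ in $\fn$. Given $m_{j-1}$ with $\fn^{j-1}=m_{j-1}\oplus\fn^{j}$, the identity $\fn^j=[\fn,\fn^{j-1}]$ together with $\fn=m_1\oplus\fn^2$, $\fn^{j-1}=m_{j-1}\oplus\fn^j$ and $[\fn^a,\fn^b]\subseteq\fn^{a+b}$ yields $\fn^j=[m_1,m_{j-1}]+\fn^{j+1}$. The $\fs$-submodule $[m_1,m_{j-1}]$ of $\fn^j$ therefore surjects onto $\fn^j/\fn^{j+1}$, and complete reducibility lets me choose an $\fs$-submodule $m_j\subseteq[m_1,m_{j-1}]$ complementary to $[m_1,m_{j-1}]\cap\fn^{j+1}$ inside $[m_1,m_{j-1}]$; then $m_j\subseteq[m_1,m_{j-1}]$ and $\fn^j=m_j\oplus\fn^{j+1}$. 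Iterating to $j=t$ gives $\fn=m_1\oplus m_2\oplus\dots\oplus m_t$ with $\fn^j=m_j\oplus\fn^{j+1}$. For the tensor assertion: for $2\le j\le t$ the bracket induces a surjective $\fs$-module map $m_1\otimes m_{j-1}\twoheadrightarrow[m_1,m_{j-1}]$, and since $m_1\otimes m_{j-1}$ is completely reducible, $[m_1,m_{j-1}]$ is a direct summand of it, hence a sum of a subset of its irreducible constituents; so is its submodule $m_j$.

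It remains to prove that $m_1\cong\fn/\fn^2$ is a faithful $\fs$-module, and this is the step that really uses indecomposability, which I expect to be the main obstacle. The kernel $\mathfrak{k}=\ker\rho$ is an ideal of $\fs$; were it nonzero, splitting $\fs=\mathfrak{k}\oplus\mathfrak{k}'$ into complementary ideals would give $[\mathfrak{k},\fg]\subseteq\mathfrak{k}$ and $[\mathfrak{k}'\oplus\fn,\mathfrak{k}'\oplus\fn]\subseteq\mathfrak{k}'\oplus\fn$ with $[\mathfrak{k},\mathfrak{k}'\oplus\fn]=0$, so $\fg=\mathfrak{k}\oplus(\mathfrak{k}'\oplus\fn)$ would split as a direct sum of ideals, contradicting indecomposability (the Levi decomposition being nontrivial forces $\fn\neq 0$). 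Hence $\rho$ is faithful on $\fn$. Now set $\mathfrak{k}_1=\{x\in\fs:[x,\fn]\subseteq\fn^2\}$; a short Jacobi computation shows $\mathfrak{k}_1$ is an ideal of $\fs$. Since for each $j$ the bracket induces a surjective $\fs$-module map $(\fn/\fn^2)\otimes(\fn^{j-1}/\fn^{j})\twoheadrightarrow\fn^j/\fn^{j+1}$, induction on $j$ shows $\mathfrak{k}_1$ acts trivially on every $\fn^j/\fn^{j+1}$, hence on $\bigoplus_j\fn^j/\fn^{j+1}\cong\fn$; therefore $\mathfrak{k}_1\subseteq\mathfrak{k}=0$, and $\fs$ acts faithfully on $\fn/\fn^2$.

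Beyond the faithfulness argument, the only delicate point is bookkeeping: one must check that the inductive hypothesis consumed at stage $j$ — namely that $m_{j-1}$ is an $\fs$-module complement of $\fn^{j}$ in $\fn^{j-1}$ — is exactly what the construction at stage $j-1$ produced, so that the procedure yields a single internal direct sum decomposition $\fn=m_1\oplus\dots\oplus m_t$ with all the stated compatibilities simultaneously, rather than a family of unrelated splittings.
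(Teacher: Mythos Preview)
Your argument is correct and complete. The paper does not supply its own proof of this theorem: it is quoted as \cite[Theorem~2]{Snobl_2010} and used as an input to Theorem~\ref{thm:chain}, so there is no in-paper proof to compare against. Your route---building the $m_j$ inductively as $\fs$-module complements inside $[m_1,m_{j-1}]$ via Weyl's theorem, then deducing faithfulness of $m_1$ from indecomposability by showing that the ideal $\mathfrak{k}_1=\{x\in\fs:[x,\fn]\subseteq\fn^2\}$ acts trivially on every $\fn^j/\fn^{j+1}$ and hence on $\fn\cong\bigoplus_j\fn^j/\fn^{j+1}$---is exactly the natural one and matches the spirit of the original reference.
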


From Theorems \ref{thm:basiconPi} and \ref{thm:basiconLevi}, we get the following general construction of mixed algebras with chained lattices of ideals

\begin{theorem}\label{thm:chain}
	Let $\fs$ be a simple Lie algebra and $m_1$, $m_2$, $\dots$, $m_t$ irreducible $\fs$-modules with representations $\rho_i\colon \fs \to \mathfrak{gl}(m_i)$ for $i=1,\dots,t$ being $\rho_1$ faithful. Also, we have $\fs$-module homomorphisms
	\begin{equation*}
		p_{ijk}\colon m_i \otimes m_j \to m_k
	\end{equation*}
	where $1 \leq i \leq j \leq k \leq t$ and $i+j \leq k$ such that
	\begin{itemize}
		\item $p_{ijk}$ is skew-symmetric when $i=j$,
		\item $p_{ijk}$ is not null when $i=1$ and $k=1+j$
	\end{itemize}
	which also verify the identity
\begin{multline}\label{eq:thm_jac}
	\sum_{l=j+k}^{t-i} \sum_{r=i+l}^t p_{ilr}(u, p_{jkl}(v,w)) - 
	\sum_{l=i+k}^{t-j} \sum_{r=j+l}^t p_{jlr}(v, p_{ikl}(u,w)) \\+ 
	\sum_{l=i+j}^{t-k} \sum_{r=k+l}^t \hat{p}_{klr}(w, p_{ijl}(u,v)) = 0,
\end{multline}
where $u \in m_i$, $v \in m_j$ and $w \in m_k$ for $i+j+k \leq t$ and $i \leq j \leq k$. Here $\hat{p}_{klr} = p_{klr}$ if $k\leq l$, or $\hat{p}_{klr} = -p_{lkr}$ otherwise.

	The vector space $\fg = \fs \oplus m_1 \oplus m_2 \oplus \dots \oplus m_t$ with product
	\begin{align*}
		[s,s']_\fg&=[s,s']_{S},\\
		[s,u]_\fg &= \rho_i(s)(u),\\
		[u,v]_\fg &= \sum_{k=i+j}^t p_{ijk}(u,v)
	\end{align*}
	for $s, s' \in \fs$, $u \in m_i$, $v \in m_j$ and $i\leq j$ gives a Lie algebra with $(t+2)$-chain lattice of ideals
	\begin{equation*}
		0 < m_t < m_{t-1} \oplus m_t < \ldots < m_1 \oplus m_2 \oplus \dots \oplus m_t < \fg.
	\end{equation*}
	Moreover, every mixed Lie algebra with $(t+2)$-chain lattice of ideals has this form.
\end{theorem}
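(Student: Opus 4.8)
The plan is to verify the two directions separately, using Theorem~\ref{thm:basiconPi} to control the lattice and Theorem~\ref{thm:basiconLevi} to pin down the shape of the brackets in the converse.

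\emph{The construction gives a Lie algebra.} Skew-symmetry of $[\blank,\blank]_\fg$ is immediate: on $\fs$ it is inherited from $S$, the mixed brackets $[s,u]_\fg=-[u,s]_\fg$ are forced by the convention, and on $m_i\otimes m_j$ with $i<j$ we define $[v,u]_\fg=-[u,v]_\fg$ while for $i=j$ skew-symmetry of $p_{iik}$ does the job. The real content is the Jacobi identity $J(x,y,z)=0$, which by trilinearity can be checked on homogeneous triples. When all three arguments lie in $\fs$ it is the Jacobi identity of $S$; when two lie in $\fs$ and one in some $m_i$ it says exactly that $\rho_i$ is a representation; when one argument $s\in\fs$ and two lie in $m_i,m_j$ it is the statement that each $p_{ijk}\colon m_i\otimes m_j\to m_k$ is an $\fs$-module homomorphism, i.e. $\rho_k(s)\circ p_{ijk}=p_{ijk}\circ(\rho_i(s)\otimes 1+1\otimes\rho_j(s))$; and finally, when all three arguments lie in the nilradical $\fn=m_1\oplus\cdots\oplus m_t$, expanding $[[u,v]_\fg,w]_\fg+\text{cyclic}$ with the formula $[u,v]_\fg=\sum_k p_{ijk}(u,v)$ and collecting homogeneous components produces, degree by degree, precisely the displayed identity~\eqref{eq:thm_jac} (the double sums over $l$ and $r$ are bookkeeping for which $p$'s can be composed without the output degree exceeding $t$, and the hat on $p_{klr}$ accounts for reordering a pair into the convention $i\le j$). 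So the only thing to say here is that \eqref{eq:thm_jac} is \emph{equivalent} to Jacobi on $\fn$, which is a routine multilinear expansion.

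\emph{The lattice is the stated $(t+2)$-chain.} Set $\fn=m_1\oplus\cdots\oplus m_t$. The conditions $i+j\le k$ on the nonzero $p_{ijk}$ force $[m_i,m_j]_\fg\subseteq\bigoplus_{k\ge i+j}m_k$, so $\fn$ is a nilpotent ideal with $\fn^r\subseteq\bigoplus_{k\ge r}m_k$; the hypothesis that $p_{1,j,j+1}\ne 0$ gives the reverse containment $m_{j+1}\subseteq[m_1,m_j]_\fg+\fn^{j+2}$, whence by downward induction $\fn^r=m_r\oplus\cdots\oplus m_t$ and $\fn$ has nilindex $t+1$. Moreover $\fg/\fn\cong\fs$ is simple and $\fs\oplus_{\rho}\fn$ is indecomposable because $\rho_1$ is faithful (any ideal complementary to $\fn$ would have to centralise $m_1$). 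Each $\fn^j/\fn^{j+1}\cong m_j$ is an irreducible $\fs$-module and $m_1$ is faithful, so Theorem~\ref{thm:basiconPi} applies directly and tells us the ideals of $\fg$ are exactly $0=\fn^{t+1}\subsetneq\fn^t\subsetneq\cdots\subsetneq\fn^1=\fn\subsetneq\fg$, which is the $(t+2)$-chain displayed.

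\emph{Every mixed $(t+2)$-chain Lie algebra has this form.} Given such a $\fg$, Theorem~\ref{thm:basiconPi} says $\fg=\fs\oplus\fn$ with $\fs$ simple, $\fn$ nilpotent of nilindex $t+1$, each $\fn^j/\fn^{j+1}$ an irreducible $\fs$-module, and $\fn/\fn^2$ faithful. Choose $\fs$-module splittings $m_j$ of $\fn^j/\fn^{j+1}$ inside $\fn^j$ (possible by complete reducibility, since $\fs$ is semisimple), so $\fn=m_1\oplus\cdots\oplus m_t$ as $\fs$-modules with $\fn^j=m_j\oplus\fn^{j+1}$. Let $\rho_i$ be the action of $\fs$ on $m_i$; then $\rho_1$ is faithful. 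For the brackets, given $u\in m_i$, $v\in m_j$ with $i\le j$, write $[u,v]_\fg\in\fn^{i+j}=\bigoplus_{k\ge i+j}m_k$ and let $p_{ijk}(u,v)$ be its $m_k$-component; each $p_{ijk}$ is an $\fs$-homomorphism because $\ad\fs$ acts by derivations preserving the $\fn^k$ filtration and the chosen splitting is $\fs$-stable, it is skew when $i=j$, and $p_{1,j,j+1}\ne 0$ because otherwise $[m_1,\fn^j]_\fg\subseteq\fn^{j+2}$ would force $\fn^{j+1}=\fn^{j+2}$, contradicting the nilindex. Finally, the Jacobi identity of $\fg$ restricted to $\fn$, expanded as above, is exactly~\eqref{eq:thm_jac}. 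Hence $\fg$ is the algebra built from the triple $(\fs,\{m_i\},\{p_{ijk}\})$.

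\emph{Main obstacle.} The one genuinely delicate point is the precise bookkeeping in~\eqref{eq:thm_jac}: one must check that expanding $J(u,v,w)=0$ for $u\in m_i,v\in m_j,w\in m_k$ and projecting onto each homogeneous component $m_r$ yields exactly the stated double sums, with the index ranges $j+k\le l\le t-i$, $i+l\le r\le t$ (and cyclic) and the sign convention $\hat p_{klr}$. This is purely combinatorial but must be done carefully, since an off-by-one in the summation bounds or a wrong sign would make the identity either too weak or unsatisfiable; everything else reduces to the cited theorems plus standard complete-reducibility and lower-central-series facts.
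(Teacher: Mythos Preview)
Your proposal is correct and follows essentially the same route as the paper: verify skew-symmetry and Jacobi case by case (the paper compresses this to one sentence, noting that \eqref{eq:thm_jac} \emph{is} Jacobi on $\fn$), identify $\fn^r=\bigoplus_{s\ge r}m_s$ via the non-vanishing of $p_{1,j,j+1}$ together with irreducibility of $m_{j+1}$ (which you use implicitly to get surjectivity), and then invoke Theorem~\ref{thm:basiconPi} both to read off the chain and, in the converse, to obtain the decomposition $\fs\oplus\fn$ with irreducible quotients $\fn^j/\fn^{j+1}$ before splitting them by complete reducibility. Your write-up is simply a more expanded version of the paper's terse argument; the only point worth making explicit is that $p_{1,j,j+1}\neq 0$ forces it to be \emph{onto} because its image is an $\fs$-submodule of the irreducible $m_{j+1}$.
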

\begin{proof}
By definition $\fg$ is skew-symmetric and satisfies Jacobi identity, as it involves the usual product in $\fs$, some of its representations $\rho_i$, or it is imposed by condition in equation~\eqref{eq:thm_jac}, which is effectively Jacobi inside $\fn=m_1 \oplus \dots \oplus m_t$. Since $m_k$ is irreducible, any (nonzero) map $p_{1jk}$ is surjective when $k=j+1$. Then, from $[\fs,m_i]_\fg\subseteq m_i$ and $[m_i,m_j]_\fg=0$ for $i+j\geq t+1$, it is straightforward to check that $\fn$ is a nilpotent ideal with $k$-th lower central term 
\begin{equation*}
    \fn^k=\underset{s\geq k}{\oplus}m_s.
\end{equation*}
In fact $\fn$ is the only maximal ideal of $\fg$ because of $\rho_1$ is faithful and irreducible and $\fs$ is a simple Lie algebra. The ideals we obtain, and the reason why every non-solvable chain has this form is obtained from Theorem~\ref{thm:basiconPi}.
\end{proof}

\begin{remark}
    Lie algebras $\fg=\fs\oplus \fn$ described in Theorem~\ref{thm:chain} are perfect algebras ($\fg = \fg^2$) with nilpotent solvable radical, $\fn = m_1 \oplus m_2 \oplus \dots \oplus m_t$ with LCS terms $\fn^k = m_k \oplus \dots m_t$ and $[\fn^i, \fn^j]\subseteq \fn^{i+j}$. In addition, the summands $m_i$ are irreducible and either $\dim m_i=1$, so $[\fs, m_i] =0$, or $[\fs, m_i] =m_i$, and $m_{j+1} \subseteq [m_1, m_j]$. In particular, the module $m_1$ generates $\fn$ as a subalgebra and $m_2\subseteq \Lambda^2m_1$ by skew commutativity.
\end{remark}

Example~\ref{so-chain} and equation~\eqref{eq:n2t-4-5-cadena} in Section~\ref{s:generalities} follow the rules of the decompositions given in Theorem~\ref{thm:chain} by using the simple Lie algebras $\fsl(\fm)$ and $\mathfrak{so}(\fm)$ and taking $m_1=\fm$ the irreducible natural module and irreducible quotients of $\Lambda^2 \fm$ and $\fm\otimes\Lambda^2\fm$ (here $\Lambda^3\fm$ must be removed).
In each example, the homomorphisms $p_{ijk}$ are given by the projections inside tensor product modules. These examples are particular cases of a more general situation. According to \cite[Theorem 3.5]{benito2013levi} and Theorem~\ref{thm:basiconPi}, the mixed Lie algebras with nilradical of type $d$ and nilindex $t+1$ in which the lattice of ideals is a $n$-element chain are of the form
\begin{equation*}
    \mathcal{L}_t(\fs,\fm,\mathfrak{i})=\fs\oplus_{\id}\frac{\fn_{d,t}}{\mathfrak{i}},
\end{equation*}
where $\fs$ is a simple subalgebra of the Levi subalgebra of derivations of the free nilpotent $\fn_{d,t}$ (see \cite[Section 3]{benito2013levi} for a complete description of $\der \fn_{d,t}$) such that $\fn_{d,t}/\fn_{d,t}^2$ is a $\fs$-irreducible and faithful module, $\mathfrak{i}$ is an ideal, and also a $\fs$-submodule of $\fn_{d,t}^2$, that properly contains $\fn_{d,t}^t$. And each $\fs$-quotient module
\begin{equation*}
    \frac{\fn_{d,t}^k+\mathfrak{i}}{\fn_{d,t}^{k+1}+\mathfrak{i}},
\end{equation*}
for $2\leq k\leq t$, is $\fs$-irreducible. Explicit expressions for the irreducible blocks $m_i$ or the general product in Theorem~\ref{thm:chain} for the Lie algebra $\mathcal{L}_t(\fs,\fm,\mathfrak{i})$ are not easy to get, even in low nilindex. In the case of the 3-dimensional split simple Lie algebra, where the irreducible modules can be described in terms of differential operators and the maps $p_{ijk}$ are given by using partial differentiation of polynomials, computational algorithms with a detailed description (including bases and bracket products) of the algebras can be implemented.

\subsection[\texorpdfstring{$\mathfrak{sl}_2(\mathbb{K})$}{sl2}-modules and transvections]{$\bm{\mathfrak{sl}_2(\mathbb{K})}$-modules and transvections}\label{ss:sl-mod-trans}

As our final aim is constructing chains like the ones in Theorem~\ref{thm:chain} where $\fs = \mathfrak{sl}_2(\mathbb{K})$ we are going to see some arithmetic particularities of this algebra. They will play a significant role in obtaining these algebras automatically and theoretically. We follow ideas and tools from \cite{Dixmier_1984} and \cite{bremner2004invariant}.

Let $\mathbb{K}[x,y]$ be the ring of polynomials in the variables $x$ and $y$. For every $d$ greater or equal than 0, we denote as $V_d=\spa\langle x^d, x^{d-1}y,...,xy^{d-1}, y^d \rangle$ the set of homogeneous polynomials of degree $d$. Abusing notation, we will write $\deg V_d = d$.
Then, $V_d$ are vector spaces of dimension $d+1$, with $V_0= \mathbb{K}\cdot 1$. The set $V_d$ can also be viewed as a $\mathfrak{sl}_2(\mathbb{K})$-module in a natural way once $\mathfrak{sl}_2(\mathbb{K})$ is identified, into the Lie algebra $\mathfrak{gl}(\mathbb{K}[x,y])$, as the Lie subalgebra of partial derivations 
\begin{equation}\label{eq:partial-der-sl2}
	\spa\left\langle 
	e=x\,\frac{\partial}{\partial y},\enspace
	f=y\,\frac{\partial}{\partial x},\enspace
	h=x\,\frac{\partial}{\partial x}- y\,\frac{\partial}{\partial y}
	\right\rangle.
\end{equation}
	
This action turns $V_d$ into an irreducible module as seen in Figure~\ref{fig:arrow}. Even more, any finite-dimensional irreducible module of $\mathfrak{sl}_2(\mathbb{K})$ can be viewed in this way, being $V_0$ the trivial module.
\begin{figure}[!ht]
	\begin{tikzpicture}
	[	
		scale=0.95,
		arrowleft/.style ={<-, shorten <= 0.00cm, shorten >= 0.00cm, line width=0.25mm},
		arrowright/.style={->, shorten <= 0.00cm, shorten >= 0.00cm,
		line width=0.25mm}
	]
		
		\node (A0) at (0*1.5, 0) {$0$};
		\node (A1) at (1*1.5, -1.2) {$x^d$};
		\node (A2) at (2*1.5, -1.2) {$x^{d-1}y$};
		\node (A3) at (3*1.5, -1.2) {$x^{d-2}y^2$};
		\node (AM) at (4.25*1.5, 0) {$\ldots$};
		\node (A4) at (5.5*1.5, -1.2) {$x^2y^{d-2}$};
		\node (A5) at (6.5*1.5, -1.2) {$xy^{d-1}$};
		\node (A6) at (7.5*1.5, -1.2) {$y^d$};
		\node (A7) at (8.5*1.5, 0) {$0$};
		\coordinate (S) at (0, 0.2);
		
		\foreach \d in {1,2,3,5.5,6.5,7.5}
		   \filldraw [black] (1.5*\d, 0) circle (2pt);
		
		\foreach \d in {1,2,3,4,5.5,6.5,7.5}
		   \node at (1.5*\d-0.75, -0.65) {$e$};
		
		\foreach \d in {1,2,3,4.5,5.5,6.5,7.5}
		   \node at (1.5*\d+0.75, 0.7) {$f$};
		
		\foreach \d in {1,2,3,5.5,6.5,7.5}
		   \node at (1.5*\d, 1.65) {$h$};
		
		\foreach \d in {2,3,4,5.5,6.5,7.5,8.5}
			\draw[arrowleft] (1.5*\d-0.2,0.1) arc (30:150:1.7232/2-0.2);
			
		\foreach \d in {1,2,3,4,5.5,6.5,7.5}
			\draw[arrowright] (1.5*\d-0.2,-0.1) arc (-30:-150:1.7232/2-0.2);
		
		\foreach \d in {1,2,3,5.5,6.5,7.5}
			\draw[arrowleft, rotate=-90] (-0.4, 1.5*\d+0.15) arc (20:350:0.5);
	\end{tikzpicture}
	\caption{Diagram representing the $\mathfrak{sl}_2$-action over module $V_d$.}
	\label{fig:arrow}
\end{figure}
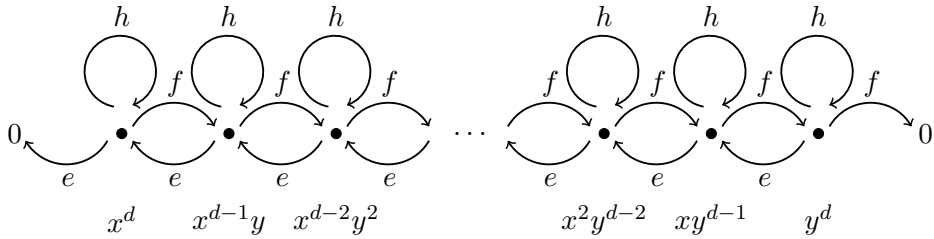

The \emph{Clebsch-Gordan's formula} gives the following decomposition of the tensor product of two $\mathfrak{sl}_2(\mathbb{K})$-irreducible modules. For $n \geq m$ we have
\begin{equation}\label{eq:clebsc-gordan}
V_n\otimes V_m\cong V_m\otimes V_n \cong \bigoplus_{k=0}^m V_{n+m-2k} \cong V_{n+m}\oplus V_{n+m-2}\oplus\dots\oplus V_{n-m}.
\end{equation}
While, when $n=m$ we can decompose
\begin{equation*}
\Lambda^2 V_n
\cong
\bigoplus_{k=0}^{\lfloor \frac{n-1}{2} \rfloor} V_{2n-4k-2}
\cong
V_{2n-2}\oplus V_{2n-6}\oplus V_{2n-10}\oplus\dots,
\end{equation*}
which is simply taking the odd $k$-summands in equation~\eqref{eq:clebsc-gordan}.

Now, for $0\leq k\leq \min(n,m)$, let us consider the bilinear \emph{transvection map} introduced in~\cite{Dixmier_1984} as $(\cdot,\cdot)_k\colon V_n\times V_m \to V_{n+m-2k}$ where
\begin{equation*}
(f,g)_k=
\frac{(m-k)!}{m!}\,
\frac{(n-k)!}{n!}\,
\sum_{i=0}^k\,(-1)^i
\binom{k}{i}\,
\frac{\partial^kf}{\partial x^{k-i}\partial y^i}\,
\frac{\partial^kg}{\partial x^{i}  \partial y^{k-i}}
\end{equation*}
We will use these transvections to define $\mathfrak{sl}_2(\mathbb{K})$-invariant products as it is explained in \cite{Dixmier_1984}. From Schur's Lemma and Clebs-Gordan's formula, it is easy to prove the following result: 
\begin{lemma}\label{lem:transvections}
Any bilinear $\mathfrak{sl}_2(\mathbb{K})$-invariant product $P_{n,m,p}\colon V_n\otimes V_m\to V_p$, satisfies:
\begin{itemize}
\item $P_{n,m,p}=\alpha\cdot (f,g)_k$ for some $\alpha \in \mathbb{K}$ when $p= n+m-2k$ and $0\leq k\leq \min\{n,m\}$. Here $P_{n,m,p}(b,a)=(-1)^kP_{n,m,p}(a,b)$.
\item $P_{n,m,p}=0$ otherwise.

\end{itemize}
So the product $P_{n,m,p}$ is either symmetric or skew-symmetric.
\end{lemma}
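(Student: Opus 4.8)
The plan is to invoke Schur's Lemma together with the Clebsch--Gordan decomposition \eqref{eq:clebsc-gordan} to reduce the classification of invariant products to a one-dimensional multiplicity computation, and then to read off the symmetry type from the known behaviour of the transvection maps $(\cdot,\cdot)_k$ under swapping arguments. First I would regard $P_{n,m,p}$ as an $\fsl_2(\ku)$-module homomorphism $V_n\otimes V_m\to V_p$; the space of such homomorphisms is $\operatorname{Hom}_{\fsl_2}(V_n\otimes V_m,V_p)$. By complete reducibility and \eqref{eq:clebsc-gordan}, $V_n\otimes V_m\cong\bigoplus_{k=0}^{\min\{n,m\}}V_{n+m-2k}$, so by Schur's Lemma this Hom-space is zero unless $V_p$ is isomorphic to one of the summands, i.e.\ unless $p=n+m-2k$ for some $0\le k\le\min\{n,m\}$, in which case it is one-dimensional (each summand $V_{n+m-2k}$ occurs with multiplicity one, since the values $n+m-2k$ are pairwise distinct as $k$ ranges over $0,\dots,\min\{n,m\}$). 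This immediately yields the second bullet ($P_{n,m,p}=0$ otherwise) and reduces the first bullet to exhibiting one nonzero invariant product for each admissible $p=n+m-2k$.

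Next I would check that the transvection $(\cdot,\cdot)_k\colon V_n\times V_m\to V_{n+m-2k}$ of \cite{Dixmier_1984} is indeed $\fsl_2(\ku)$-invariant and nonzero: invariance is the statement that $e,f,h$ (acting as the partial-derivative operators of \eqref{eq:partial-der-sl2}) are derivations for this bilinear map, which is a direct though slightly tedious verification using the product/Leibniz rule for partial derivatives — alternatively one may simply cite \cite{Dixmier_1984}. Nonvanishing follows by testing on a convenient pair, e.g.\ evaluating $(x^n,y^m)_k$, whose leading binomial-sum coefficient is nonzero because $\binom{k}{i}$ and the factorial prefactor do not vanish. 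Since $\operatorname{Hom}_{\fsl_2}(V_n\otimes V_m,V_{n+m-2k})$ is one-dimensional and spanned by $(\cdot,\cdot)_k$, every invariant $P_{n,m,p}$ with $p=n+m-2k$ is of the form $\alpha\cdot(\cdot,\cdot)_k$ for a unique $\alpha\in\ku$.

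Finally, for the symmetry claim I would compute the effect of swapping arguments in the transvection formula. Interchanging $f$ and $g$ sends the index $i$ in the defining sum to $k-i$ and swaps the roles of the $x$- and $y$-derivatives; re-indexing by $j=k-i$ and using $\binom{k}{k-j}=\binom{k}{j}$ together with $(-1)^{k-j}=(-1)^k(-1)^{-j}=(-1)^k(-1)^j$ shows $(g,f)_k=(-1)^k(f,g)_k$. Hence $P_{n,m,p}(b,a)=(-1)^kP_{n,m,p}(a,b)$, so $P_{n,m,p}$ is symmetric when $k$ is even and skew-symmetric when $k$ is odd; in particular it is always one or the other. The main obstacle, such as it is, is bookkeeping: one must be careful that the admissible values $n+m-2k$ really are distinct (so multiplicities are exactly one and Schur applies cleanly) and that the re-indexing in the swap computation is carried out correctly; both are routine but must be done without slips.
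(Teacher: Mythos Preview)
Your proposal is correct and follows essentially the same approach as the paper: identify invariant products with $\operatorname{Hom}_{\fsl_2}(V_n\otimes V_m,V_p)$, then use Clebsch--Gordan and Schur's Lemma to see this space has dimension at most one. The paper's proof is in fact terser than yours, omitting the explicit verification that transvections are nonzero and the swap computation for the symmetry sign, so your write-up would only add welcome detail.
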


\begin{proof}
    Note that the set of $\mathfrak{sl}_2(\mathbb{K})$-invariant products $P_{m,n,p}$ are just the vector space of module homomorphisms $Hom_{\fsl_2(\mathbb{K})}(V_n\otimes V_m, V_p)$. The dimension of this set is equal to the number of copies of the irreducible $V_p$ inside $V_n\otimes V_m$. According to Clebs-Gordan's formula, the dimension is at most 1.
\end{proof}
This lemma plays an important role in the construction of Lie algebras in which their Levi factor is, up to isomorphism, $\mathfrak{sl}_2(\mathbb{K})$. 

\subsection{Algorithms}\label{sub:algorithm}

Now we have all the tools to start constructing Lie algebras whose ideals are in a chain. First, note we will go back to notation $p_{ijk}\colon m_i \otimes m_j \to m_k$, instead of the one Lemma~\ref{lem:transvections}, as it will be more convenient. Aside, as every $\mathfrak{sl}_2$-module $m_i$ can be identified by an integer, our algorithm will receive integers. But, instead of integers referring to the dimension or degree of each module we will set the integers in the following way: $n_1,n_2,\dots,n_t$ will define modules $m_1,m_2,\ldots,m_t$ where
\begin{equation}\label{eq:mi}
	m_i = V_{i\cdot n_1 - 2\sum_{j=2}^i n_j} = V_{i\cdot n_1 - 2n_2 - \ldots - 2n_i}
\end{equation}
So $m_1 = V_{n_1}$, $m_2 = V_{2n_1-2n_2}$, $m_3 = V_{3n_1-2n_2-2n_3}$ and so on. Here
\begin{multline*}
	\dim m_i = \dim m_1 + \dim m_{i-1} - 2n_i - 1 \\= i\cdot n_1 - 2\sum_{j=2}^i n_j + 1 = i\cdot n_1 - 2n_2 - \ldots - 2n_i + 1.
\end{multline*}

As this type of algebras would appear constantly till the end of the article, we would introduce the following definition:
\begin{definition}
	We say a Lie algebra $\fg$ is a $\mathfrak{sl}_2$-chained Lie algebra of length $t+2$ when $\fg$ has Levi decomposition $\fs \oplus \fn$ with $\fs \cong \mathfrak{sl}_2$ and it is form as in Theorem~\ref{thm:chain}.
 \end{definition}
 We will denote these $\mathfrak{sl}_2$-chained Lie algebras as
	\begin{equation*}
	\mathfrak{C}( \{m_i\}_{i=1,\dots,t },\{\alpha_{ijk}\}_{i=1,\dots,\lfloor\frac{t}{2}\rfloor;\, j=i,\dots,t-1;\, k=i+j,\dots,t}),
	\end{equation*}
	where $m_i$ will be the $\mathfrak{sl}_2$-modules of the form $V_{k_i}$ for some $k_i$ and
	\begin{equation*}
		p_{ijk} = \alpha_{ijk}(\cdot, \cdot)_{c_{ijk}}
	\end{equation*}
	where
	\begin{align*}
		c_{ijk} &= \frac{\dim m_i + \dim m_j - \dim m_k - 1}{2}\\
		  &= \frac{\deg m_i + \deg m_j - \deg m_k}{2} \\
		  &= \left(\frac{i+j-k}{2}\right) n_1 - \sum_{l=2}^i n_l + \sum_{l=j+1}^k n_l\\
		  &= \left(\frac{i+j-k}{2}\right) n_1 - n_2 - \dots -n_i + n_{j+1} + \dots + n_k.
	\end{align*}
	Therefore, our chains will be of the form
	\begin{equation*}
		\fg=\mathfrak{sl}_2 \oplus m_1 \oplus m_2 \oplus \dots \oplus m_t,
	\end{equation*}
	with $t+2$ ideals in a chain: $0$, $\bigoplus_{i=k}^t m_i$ for $k=1,\dots,t$; and $\fg$. They will also have the following Lie bracket definition ($s,s'\in \mathfrak{sl}_2$, $u\in m_i$):
\begin{itemize}
	\item Inside $\mathfrak{sl}_2$, where the elements are viewed as partial differentiation maps according to equation~\eqref{eq:partial-der-sl2}, the definition is given by the using usual special linear Lie bracket $[s, s']=ss'-s's$.
	\item The product $[\mathfrak{sl}_2, m_i]$ is defined using the representation $\rho_i$,
	\begin{equation*}
		[s,u] = \rho_i(s)(u)
	\end{equation*}
	where for the standard basis in $\mathfrak{sl}_2$ it is defined as
	\begin{align*}
		\rho(e) &= x\,\frac{\partial}{\partial y},\\
		\rho(f) &= y\,\frac{\partial}{\partial x},\\
		\rho(h) &= x\,\frac{\partial}{\partial x}- y\,\frac{\partial}{\partial y}.
	\end{align*}
	\item The product between the modules satisfies a couple of conditions:
	\begin{align*}
		[m_1, m_i] &\supseteq m_{i+1}, \\
		[m_i, m_j] &\subseteq \sum_{k=i+j}^t m_k = m_{i+j} + m_{i+j+1} + \dots + m_t
	\end{align*}
	Given $u \in m_i$ and $v \in m_j$ where $i\leq j$ the Lie product is
	\begin{equation*}
		[u,v] = \sum_{k=i+j}^t p_{ijk}(u,v)
	\end{equation*}
	where $p_{ijk}\colon m_i \otimes m_j \to m_k$ such that
	\begin{equation*}
		p_{ijk}(u,v) = \alpha_{ijk} \cdot (u,v)_{c_{ijk}}
	\end{equation*}
	If $c_{ijk}\notin \mathbb{Z}_{\geq 0}$ then, as stated in Lemma~\ref{lem:transvections}, $\alpha_{ijk} = 0$. Moreover
	\begin{itemize}
		\item $\alpha_{ijk} \neq 0$ for $i=1$ and $k=j+1$,
		\item As $p_{iik}$ must be skew-symmetric then $c_{iik}$ must be odd or $\alpha_{iik} = 0$.
	\end{itemize}
\end{itemize}

So, when $t=3$ we will have chains $\mathfrak{C}(\{m_1,m_2,m_3\}, \{\alpha_{112}, \alpha_{113},\alpha_{123}\})$, and for $t=4$ chains will be
\begin{equation*}
	\mathfrak{C}(\{m_1,m_2,m_3,m_4\}, \{\alpha_{112}, \alpha_{113},\alpha_{114},\alpha_{123},\alpha_{124},\alpha_{134},\alpha_{224}\}).
\end{equation*}

Now, all these tools and notation, in combination with Theorem~\ref{thm:chain}, open up the idea to develop Algorithm~\ref{alg:main} to find all these algebras given the already mentioned list of integers referring to the irreducible modules.
\begin{algorithm}\label{alg:main}
	Checks if a succession of $t$ integers referring to $t$ modules is compatible with being a Lie algebra whose ideals are in chain of $t+2$ ideals.

\noindent\textbf{Input}: The algorithm receives $t$ integers $n_1$, $n_2$, $\dots$, $n_t$ referring to $\mathfrak{sl}_2$-modules as defined by equation~\eqref{eq:mi}.

\noindent\textbf{Output}: A boolean value (true or false) indicating if there is a  Lie algebra $\mathfrak{C}(\{m_1,m_2,\dots, m_t\},\{\alpha_{ijk}\}_{i=1,\dots, \lfloor\frac{t}{2}\rfloor;\, j=i,\dots,t-1;\, k=i+j,\dots,t})$ for some $\alpha_{ijk}$. And the list those valid $\alpha_{ijk}$ such the chained algebra exists.

In case the validity of the algebra is subject to only some values of the parameters $\alpha_{ijk}$ the algorithm also gives them.

\noindent\textbf{Steps:} The algorithm is divided in two main steps:
\begin{enumerate}
	\item Check integers input: this is equivalent to checking if 
	\begin{itemize}
		\item $m_2 \subseteq \Lambda^2 m_1$
		\item $m_{i+1} \subseteq m_1 \otimes m_i$ for every $i = 2, \dots, t$
	\end{itemize}
	In terms of integers, this translates into $n_2$ being an odd number, and for $i=1,\dots,t-1$,
	\begin{equation*}
		0 \leq n_{i+1} \leq \min(n_1, \dim m_{i}-1) = \min\left(n_1, i\cdot n_1 - 2\sum_{j=2}^{i} n_j\right)
	\end{equation*}
	\item Check Jacobi identities inside $m_1 \oplus m_2 \oplus \dots \oplus m_t$: We need to study $J(u,v,w)=0$ for $u \in m_i$, $v \in m_j$ and $w \in m_k$ such $i+j+k \leq t$ and $i \leq j \leq k$. As seen in Theorem~\ref{thm:chain}, here
\begin{multline}\label{eq:jacobiComplete}
	J(u,v,w) = [u, [v,w]] + [v, [w,u]] + [w, [u,v]] \\ = 
	\sum_{l=j+k}^{t-i} \sum_{r=i+l}^t p_{ilr}(u, p_{jkl}(v,w)) - 
	\sum_{l=i+k}^{t-j} \sum_{r=j+l}^t p_{jlr}(v, p_{ikl}(u,w)) \\  + 
	\sum_{l=i+j}^{t-k} \sum_{r=k+l}^t \hat{p}_{klr}(w, p_{ijl}(u,v)) = 0,
\end{multline}
where $\hat{p}_{klr} = p_{klr}$ if $k\leq l$, or $\hat{p}_{klr} = -p_{lkr}$ otherwise.
\end{enumerate}
\end{algorithm}

In case we want to find every tuple $(n_1, n_2, \dots, n_t)$ that gives a chain we should call Algorithm~\ref{alg:main} using at least all integers that satisfy step 1 in Algorithm~\ref{alg:main}.

Note, Algorithm~\ref{alg:main} only works for every chain of more than 3 ideals (so $t> 2$). But we do not need more in case we want to study smaller cases. A 1-chain is simply the zero-dimensional Lie algebra, the only 2-chain of this form is the simple Lie algebra $\mathfrak{sl}_2$ with no modules. And, even for cases as 3-chains of 4-chains ($t=1,2$) the algorithm is unnecessary as the algebras are $\mathfrak{sl}_2 \oplus V_n$ for every $n \geq 0$, and $\mathfrak{sl}_2 \oplus V_n \oplus V_{2n-2k}$ for $1 \leq k \leq n$ and $k$ being an odd number. In all those algebras, skew-symmetry is guaranteed by transvection properties, while the Jacobi identity is trivially null in the modules. It is only in larger cases when the use of the algorithm becomes relevant for finding valid algebras. That is why, for cases $t=3$ and $t=4$ we can find a detailed implementation in Algorithm~\ref{alg:5chain} and Algorithm~\ref{alg:6chain} respectively.

\begin{algorithm}\label{alg:5chain}
	Detailed implementation of Algorithm~\ref{alg:main} in case $t=3$.

\noindent\textbf{Input}: Three integers $n_1$, $n_2$ and $n_3$ referring to modules
\begin{itemize}
	\item $m_1 = V_{n_1}$,
	\item $m_2 = V_{2n_1-2n_2}$,
	\item $m_3 = V_{3n_1-2n_2-2n_3}$.
\end{itemize}

\noindent\textbf{Output}: A boolean value (true or false) indicating if there is a  Lie algebra $\mathfrak{C}(\{m_1,m_2,m_3\},\{\alpha_{112}, \alpha_{113}, \alpha_{123}\})$ for any $\alpha_{ijk}$ such that $\alpha_{112} \cdot\alpha_{123} \neq 0$.

\noindent\textbf{Steps}: The algorithm is divided in two main steps:
\begin{enumerate}
	\item Check integers input:
	\begin{itemize}
		\item $1 \leq n_2 \leq n_1$ and $n_2$ is odd,
		\item $0 \leq n_3 \leq \min(n_1, \dim m_2-1) = \min(n_1, 2n_1 - 2n_2)$.
	\end{itemize}
	\item Check Jacobi identity inside the nilradical $N = m_1 \oplus m_2 \oplus m_3$. As we have $t=3$, equation~\eqref{eq:jacobiComplete} can be simplified as we have only one possibility: $i = j= k = 1$. Therefore, the only identity to check is 
\begin{equation*}
	p_{123}(u, p_{112}(v, w)) +
	p_{123}(v, p_{112}(w, u)) +
	p_{123}(w, p_{112}(u, v)) = 0.
\end{equation*}
for every $u,v,w \in m_1$. Every term of this equality has both coefficients $\alpha_{123}$ and $\alpha_{112}$, and, as they are not zero, we can simplify obtaining
\begin{equation}\label{eq:jac3}
	(u, (v, w)_{n_2})_{n_3} +
	(v, (w, u)_{n_2})_{n_3} +
	(w, (u, v)_{n_2})_{n_3} = 0.
\end{equation}
If this is true $\mathfrak{C}(\{m_1,m_2,m_3\},\{\alpha_{112}, \alpha_{113}, \alpha_{123}\})$ is a Lie algebra for any $\alpha_{ijk}$ such that $\alpha_{112} \cdot\alpha_{123} \neq 0$.
\end{enumerate}
\end{algorithm}

Applying Algorithm~\ref{alg:5chain}, we can obtain every possible 5-chain for $n_1 \leq 32$. This way we observe two families: the first family are 4 chains that exist for every $n_1$ and are described in Table~\ref{tab:5chainsG}. The second family are algebras which repeat every four $n_1$ values. This last family appears in Table~\ref{tab:5chainsM}. And, up to\footnote{The $n_1=32$ cap is set arbitrarily to limit the computational cost, and it is not based on some property that does not work for greater values.} $n_1=32$ these are the only chains of 5 ideals. Therefore, it is quite probable that this extends for every $n_1$.

\begin{table}[h!]
\centering
\renewcommand\arraystretch{1.2}
\begin{tabular}{|ccc|c|c|}\hline
$n_1$&$n_2$&$n_3$& $\mathfrak{sl}_2$-modules    & Condition \\\hline
 $n$ & 1 & 0 & $V_{n} \oplus V_{2n-2} \oplus V_{3n-2}$ & $n\geq 1$\\\hline
 $n$ & 1 & 1 & $V_{n} \oplus V_{2n-2} \oplus V_{3n-4}$ & $n\geq 2$\\\hline
 $n$ & 1 & 3 & $V_{n} \oplus V_{2n-2} \oplus V_{3n-8}$ & $n\geq 3$\\\hline
 $n$ & 3 & 1 & $V_{n} \oplus V_{2n-6} \oplus V_{3n-8}$ & $n\geq 4$\\\hline
\end{tabular}
\caption{Chains of 5 ideals for $n_1 \leq 32$ which repeat for every $n_1$.}
\label{tab:5chainsG}
\end{table}

\begin{table}[h!]
\centering
\renewcommand\arraystretch{1.2}
\begin{tabular}{|P{1.5cm}P{1.75cm}P{1.5cm}|c|c|}\hline
$n_1$&$n_2$&$n_3$& $\mathfrak{sl}_2$-modules    & Condition \\\hline
$4n$   & $2n+1$ & $4n-3$ & $V_{4n}   \oplus V_{4n-2} \oplus V_{4}$ & $n\geq 2$\\\hline
$4n+1$ & $2n+1$ & $4n$   & $V_{4n+1} \oplus V_{4n}   \oplus V_{1}$ & $n\geq 0$\\\hline
$4n+2$ & $2n+1$ & $4n+1$ & $V_{4n+2} \oplus V_{4n+2} \oplus V_{2}$ & $n\geq 0$\\\hline
$4n+3$ & $2n+1$ & $4n+3$ & $V_{4n+3} \oplus V_{4n+4} \oplus V_{1}$ & $n\geq 0$\\\hline
$4n+4$ & $2n+1$ & $4n+3$ & $V_{4n+4} \oplus V_{4n+6} \oplus V_{4}$ & $n\geq 0$\\\hline
\end{tabular}
\caption{Chains of 5 ideals for $n_1 \leq 32$ which repeat every four $n_1$.}
\label{tab:5chainsM}
\end{table}

\begin{algorithm}\label{alg:6chain}
	Detailed implementation of Algorithm~\ref{alg:main} in case $t=4$.

\noindent\textbf{Input}: Four integers $n_1$, $n_2$, $n_3$ and $n_4$ referring to modules
\begin{itemize}
	\item $m_1 = V_{n_1}$,
	\item $m_2 = V_{2n_1-2n_2}$,
	\item $m_3 = V_{3n_1-2n_2-2n_3}$,
	\item $m_4 = V_{4n_1-2n_2-2n_3-2n_4}$.
\end{itemize}

\noindent\textbf{Output}: A boolean value (true or false) indicating if there is a  Lie algebra $\mathfrak{C}(\{m_1,m_2,m_3, m_4\},\{\alpha_{ijk}\}_{i=1,2;\, j=i,\dots,3;\, k=i+j,\dots,4})$ for some $\alpha_{ijk}$. It also returns a value $\alpha$ which gives the following scalar restriction over $\alpha_{224}$:
\begin{equation}\label{eq:alpha}
	\alpha = \frac{\alpha_{224}\cdot \alpha_{112}}{\alpha_{123}\cdot \alpha_{134}}.
\end{equation}
Any values for $\alpha_{112}, \alpha_{123}, \alpha_{134}$ different from zero would give an algebra, at least when considering $\alpha_{124} = \alpha_{113} = 0$.

\noindent\textbf{Steps}: The algorithm is divided in two main steps:
\begin{enumerate}
	\item Check integers input:
	\begin{itemize}
		\item $1 \leq n_2 \leq n_1$ and $n_2$ is odd,
		\item $0 \leq n_3 \leq \min(n_1, \dim m_2-1) = \min(n_1, 2n_1 - 2n_2)$,
		\item $0 \leq n_4 \leq \min(n_1, \dim m_3-1) = \min(n_1, 3n_1 - 2n_2 - 2n_3)$.
	\end{itemize}
	\item Check Jacobi identity inside the nilradical $N = m_1 \oplus m_2 \oplus m_3 \oplus m_4$. Here $t=4$, so equation~\eqref{eq:jacobiComplete}, taking into account $p_{114}$ and $p_{124}$ go to the centre, appears in two scenarios:
	\begin{enumerate}
		\item Three elements $u,v,w \in m_1$.
		\begin{multline*}
			p_{123}(u, p_{112}(v, w)) +
			p_{123}(v, p_{112}(w, u)) +
			p_{123}(w, p_{112}(u, v)) \\+
			p_{124}(u, p_{112}(v, w)) + 
 			p_{124}(v, p_{112}(w, u)) +
 			p_{124}(w, p_{112}(u, v)) \\+ 
 			p_{134}(u, p_{113}(v, w)) +
 			p_{134}(v, p_{113}(w, u)) + 
 			p_{134}(w, p_{113}(u, v)) = 0.
		\end{multline*}
		Taking projections, we can separate the first three addends from the rest into two null equations. The first one, following the same procedure as in Algorithm~\ref{alg:5chain} turns again into equation~\eqref{eq:jac3}. While the second part could be omitted taking $\alpha_{124} = 0$ and $\alpha_{113} = 0$.
		\item Two elements $u,v \in m_1$ and another $w \in m_2$. 
		\begin{equation}\label{eq:cond42}
			p_{134}(u, p_{123}(v, w)) -
			p_{134}(v, p_{123}(u, w)) +
  			p_{224}(w, p_{112}(u, v)) = 0.
		\end{equation}
		For this equation we have two options:
		\begin{itemize}
			\item If $n_3 + n_4 - n_2$ is even or negative, or $n_2 + n_3 + n_4 > 2n_1$ then $\alpha_{224} = 0$, and equation~\eqref{eq:cond42} turns into
			\begin{equation*}
				(u,(v,w)_{n_3})_{n_4} -
				(v,(u,w)_{n_3})_{n_4} = 0
			\end{equation*}
			\item If not, equation~\eqref{eq:cond42} turns into
			\begin{equation*}
				(u,(v,w)_{n_3})_{n_4} -
				(v,(u,w)_{n_3})_{n_4} + \alpha\cdot
				(w,(u,v)_{n_2})_{n_3+n_4-n_2} = 0
			\end{equation*}
			for $\alpha = \frac{\alpha_{224}\alpha_{112}}{\alpha_{123}\alpha_{134}}$. Note this $\alpha$ is unique as in this case we have $(w, (u,v)_{n_2})_{n_3+n_4-n_2} \neq 0$.
		\end{itemize}
		
	\end{enumerate}	
\end{enumerate}
\end{algorithm}

On the same way, applying Algorithm~\ref{alg:6chain}, we can try to obtain every possible module combination of 6-chains for $n_1 \leq 32$. Again, we can distinguish two groups. The general one that repeats for every $n_1$ which appears in Table~\ref{tab:chains6G}. But, in contrast to what happens with five ideals, in this case there are some chains that only work for some $n_1$ values. These particular cases are listed in Table~\ref{tab:chains6E}.
\begin{table}[h!]
\centering
\renewcommand\arraystretch{1.5}
\begin{tabular}{|cccc|c|c|c|}\hline
$n_1$&$n_2$&$n_3$&$n_4$& $\mathfrak{sl}_2$-modules                           &$\alpha$&Condition\\[0.1cm]\hline
$n$& 1 & 0 & 0 & $V_{n} \oplus V_{2n-2} \oplus V_{3n-2} \oplus V_{4n-2}$ & 0      & $n \geq 1$\\[0.1cm]\hline
$n$& 1 & 0 & 2 & $V_{n} \oplus V_{2n-2} \oplus V_{3n-2} \oplus V_{4n-6}$ & $\frac{4(4n-3)}{3(3n-2)}$& $n \geq 2$\\[0.1cm]\hline
$n$& 1 & 1 & 1 & $V_{n} \oplus V_{2n-2} \oplus V_{3n-4} \oplus V_{4n-6}$ & $\frac{2n-2}{3n-4}$     & $n \geq 2$\\[0.1cm]\hline
\end{tabular}
\caption{Chains of 6 ideals for $n_1 \leq 32$ which repeat for every $n_1$.}
\label{tab:chains6G}
\end{table}

\begin{table}[h!]
\centering
\renewcommand\arraystretch{1.2}
\begin{tabular}{cccccc}
$\bm{n_1}$&$\bm{n_2}$&$\bm{n_3}$&$\bm{n_4}$& $\bm{\mathfrak{sl}_2}$\textbf{-modules}                     &$\bm{\alpha}$\\\hline
 3 & 1 & 1 & 3 & $V_{3} \oplus V_{4}  \oplus V_{5}  \oplus V_{2} $ & 7/5    \\
 3 & 1 & 3 & 1 & $V_{3} \oplus V_{4}  \oplus V_{1}  \oplus V_{2} $ & -2     \\\hline
 4 & 1 & 3 & 3 & $V_{4} \oplus V_{6}  \oplus V_{4}  \oplus V_{2} $ & 3/2    \\
 4 & 3 & 1 & 3 & $V_{4} \oplus V_{2}  \oplus V_{4}  \oplus V_{2} $ & 1/2    \\\hline
 5 & 1 & 3 & 5 & $V_{5} \oplus V_{8}  \oplus V_{7}  \oplus V_{2} $ & 22/21  \\
 5 & 3 & 1 & 5 & $V_{5} \oplus V_{4}  \oplus V_{7}  \oplus V_{2} $ & 12/7   \\\hline
 6 & 1 & 3 & 5 & $V_{6} \oplus V_{10} \oplus V_{10} \oplus V_{6} $ & 1      \\
\end{tabular}
\caption{Chains of 6 ideals for $n_1 \leq 32$ which do not repeat for $n_1$.}
\label{tab:chains6E}
\end{table}

\begin{remark}
In case we want to study what happens in $(t+2)$-chains for $t\geq 5$ we can use the general Algorithm~\ref{alg:main}. It is important to note, that in these cases the complexity increases rapidly. For instance, when $t=5$, which is the simplest case, checking Jacobi identity following equation~\eqref{eq:jacobiComplete} produces up to 4 cases to study: 
\begin{enumerate}
	\item Three elements in $m_1$
	\item Two elements in $m_1$ and other in $m_2$
	\item Two elements in $m_1$ and other in $m_3$
	\item Two elements in $m_2$ and other in $m_1$
\end{enumerate}
As seen in Algorithm~\ref{alg:5chain} and Algorithm~\ref{alg:6chain}, many $\alpha_{ijk}$ could be considered null and we could simplify them. But, as $\alpha_{224}$ could be different from zero many more subcases appear making it much harder to solve.
\end{remark}

\section{Lie algebra structure existence}
\label{s:lemmas}

Although our algorithms are useful for finding chains, they do not let us generalize and prove the existence of 5 or 6-chains whose dimension is as big as we want. But, at least, seeing their results we know approximately where we should look.

Before proving some results of existence, we need to introduce Gordan identities, which appear in~\cite{Dixmier_1984} (see also \cite{bremner2004invariant} for further information). These are some relationships that transvections fulfil which will be helpful during proofs.
	
	\begin{definition}[Gordan's Identity]\label{def:gordan}
		Let $f\in V_n$, $g\in V_m$ and $h\in V_p$, and let $\alpha_1$, $\alpha_2$ and $\alpha_3$ be non-negative integers such that $\alpha_1 + \alpha_2 \leq p$, $\alpha_2 + \alpha_3 \leq m$, $\alpha_3 + \alpha_1 \leq n$, with $\alpha_1 = 0$ o $\alpha_2+\alpha_3 = m$. Then
		\begin{equation*}
			\begin{bmatrix}
				f & g & h\\
				m & n & p\\
				\alpha_1 & \alpha_2 & \alpha_3
			\end{bmatrix}
			= 0,
		\end{equation*}
		where
		\begin{align*}
		\begin{split}
			\begin{bmatrix}
				f & g & h\\
				m & n & p\\
				\alpha_1 & \alpha_2 & \alpha_3
			\end{bmatrix}
			&=
			\sum_{i\geq 0} \frac{\binom{n-\alpha_1-\alpha_3}{i}\binom{\alpha_2}{i}}{\binom{m+n-2\alpha_3-i+1}{i}} ((f,g)_{\alpha_3+i},h)_{\alpha_1+\alpha_2-i} \\
			&+ (-1)^{\alpha_1+1}
			\sum_{i\geq 0} \frac{\binom{p-\alpha_1-\alpha_2}{i}\binom{\alpha_3}{i}}{\binom{m+p-2\alpha_2-i+1}{i}} ((f,h)_{\alpha_2+i},g)_{\alpha_1+\alpha_3-i}.
		\end{split}
		\end{align*}
	\end{definition}

\subsection{Chains with five ideals}

Now, we have all the necessary tools to prove all chains in Tables~\ref{tab:5chainsG} and \ref{tab:5chainsM} can be extended for any $n$ and not only up to 32. But before proving the results we introduce the following simplified notation for Gordan's Identity. When writing $[f,g,h,n,\alpha_1, \alpha_2, \alpha_3]$ we refer to (for $f,g,h \in V_n$)
	 	\begin{equation*}
			\begin{bmatrix}
				f & g & h\\
				n & n & n\\
				\alpha_1 & \alpha_2 & \alpha_3
			\end{bmatrix}\ \text{and}
		\end{equation*}
	    \begin{equation*}
	        [f,g,h,n,\alpha_1,\alpha_2,\alpha_3]^* =\sum_{\overset{\circlearrowright}{f,g,h}}[f,g,h,n,\alpha_1,\alpha_2,\alpha_3]-\sum_{\overset{\circlearrowright}{g,f,h}}[g,f,h,n,\alpha_1,\alpha_2,\alpha_3].
	    \end{equation*}

		From now on $\rho_{(n_1,\dots, n_t)}=\rho_{n_1}\oplus \dots \oplus \rho_{n_t}$ will denote the direct sum representation of $\fsl_2(\ku)$ on the vector space module $W(n_1,\dots, n_t)=V_{n_1}\oplus V_{2n_1-2n_2}\oplus\dots\oplus V_{tn_1-2\sum_{i=2}^tn_i}$. According to Section \ref{ss:sl-mod-trans}, the action $\rho_{n_j}$ on the module $V_{jn_1-2\sum_{k=2}^jn_k}$ of homogeneous polynomials of degree $jn_1-2\sum_{k=2}^jn_k$, is given in terms of differential operators. The Lie bracket that makes $W(n_1,\dots, n_t)$ into a nilpotent Lie algebra is defined rescaling by $\alpha_{ijk}\in \ku$ the $c_{ijk}$-transvection $(f,g)_{c_{ijk}}\in V_{kn_1-2\sum_{s=2}^kn_s}$ of $f\in V_{in_1-2\sum_{r=2}^in_r}$ and $g\in V_{jn_1-2\sum_{q=2}^jn_q}$. Here $i=1,\dots, \lfloor\frac{t}{2}\rfloor$, $j=i,\dots, t-1$ and $k=i+j, \dots, t$. In the particular case where $i=1$, $j=p$ and $k=p+1$
	\begin{equation*}
	    [f,g]_W= \alpha_{ijk}(f,g)_{n_{k}}\  \text{and}\  \alpha_{ijk}\neq 0.
	\end{equation*}
	We will also denote the tuple $\lambda_{(n_1,\dots, n_t)}=(\alpha_{ijk})_{ijk}$. This encodes the structure constants of the Lie algebra $W(n_1,\dots,n_t)$. Here, $\alpha_{ijk}$ is nonzero for all $i=1, j=p=1,\dots t-1$ and $k=p+1$. We shall refer to a fold $(\alpha_{ijk})_{ijk}$ structure constants fold of the Lie algebra
    \begin{equation*}
	    \fg_{(n_1,\dots, n_t)}^{\rho,\lambda}=\fsl_2(\ku)\oplus_\rho W(n_1,\dots, n_t)_\lambda\quad \text{and} \quad \begin{cases}
     \rho=\rho_{(n_1,\dots, n_t)},\\ \lambda=\lambda_{(n_1,\dots, n_t)}.\end{cases}
	\end{equation*}
\begin{proposition}\label{prop:lie-table1}
   The $3$-tuples $(n,1,0)$, $(n,1,1)$, $(n,1,3)$ and $(n,3,1)$ generate the following $\lambda$-parametric families of $\fsl_2$-chained Lie algebras:
   
    \begin{enumerate}[\quad a)]
        \item $\fg^{\rho,\lambda}_{(n,1,0)}=\fsl_2(\ku)\oplus_\rho (V_n\oplus V_{2n-2}\oplus V_{3n-2})_\lambda$, for $n\geq 1$,
        
        \item $\fg^{\rho,\lambda}_{(n,1,1)}=\fsl_2(\ku)\oplus_\rho (V_n\oplus V_{2n-2}\oplus V_{3n-4})_\lambda$, for $n\geq 2$,
        
        \item $\fg^{\rho,\lambda}_{(n,1,3)}=\fsl_2(\ku)\oplus_\rho (V_n\oplus V_{2n-2}\oplus V_{3n-8})_\lambda$, for $n\geq 3$,
        
        \item $\fg^{\rho,\lambda}_{(n,3,1)}=\fsl_2(\ku)\oplus_\rho (V_n\oplus V_{2n-6}\oplus V_{3n-8})_\lambda$, for $n\geq 4$,
    \end{enumerate}
    where scalar threefolds $\lambda_{(n,k,j)} = (\alpha_{112}, \alpha_{113},\alpha_{123})$ determine the product in nilradical by $\alpha_{112}(\cdot,\cdot)_k$, $\alpha_{113}(\cdot,\cdot)_{\frac{2k+2j-n}{2}}$ and $\alpha_{123}(\cdot,\cdot)_j$. And they take values:
    \begin{alignat*}{2}
    \lambda_{(n,1,0)}&=(\alpha_{112}, 0,\alpha_{123})&&\\
    \lambda_{(n,1,1)}&=(\alpha_{112}, 0,\alpha_{123}) \text{, for } n\neq 2,\qquad & 
    \lambda_{(2,1,1)}&=(\alpha_{112}, \alpha_{113},\alpha_{123})\\
    \lambda_{(n,1,3)}&=(\alpha_{112}, 0,\alpha_{123}) \text{, for } n\neq 6,\qquad & 
    \lambda_{(6,1,3)}&=(\alpha_{112}, \alpha_{113},\alpha_{123})\\
    \lambda_{(n,3,1)}&=(\alpha_{112}, 0,\alpha_{123}) \text{, for } n\neq 6,\qquad & 
    \lambda_{(6,3,1)}&=(\alpha_{112}, \alpha_{113},\alpha_{123})\\
    \end{alignat*}
\end{proposition}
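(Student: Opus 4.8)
The plan is to verify, for each of the four tuples, the two steps of Algorithm~\ref{alg:5chain}: the integer bounds of Step~1 and the single transvection identity~\eqref{eq:jac3} of Step~2. The shape of the admissible threefolds $\lambda$ will then be a consequence of Lemma~\ref{lem:transvections}. Note first that for $t=3$ the nilradical is $\fn=m_1\oplus m_2\oplus m_3$ with $m_3=\fn^3=Z(\fn)$, so the map $p_{113}\colon m_1\otimes m_1\to m_3$ never enters a Jacobi triple (any iterated bracket factoring through $m_3$ dies in $\fn^4=0$); hence the only obstruction to $\fg^{\rho,\lambda}_{(n,n_2,n_3)}$ being a Lie algebra is~\eqref{eq:jac3}, and after dividing by the nonzero scalar $\alpha_{112}\alpha_{123}$ that identity no longer involves $\alpha_{113}$ or $\alpha_{123}$. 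By Lemma~\ref{lem:transvections}, $p_{113}=\alpha_{113}(\cdot,\cdot)_{c_{113}}$ with $c_{113}=\tfrac{2n_2+2n_3-n}{2}$, and since Theorem~\ref{thm:chain} requires $p_{113}$ to be skew-symmetric, $\alpha_{113}$ is forced to be $0$ unless $c_{113}$ is an odd non-negative integer (and $\leq n$). So the whole statement splits into: (1) the integer bounds, (2) the arithmetic of $c_{113}$, and (3) identity~\eqref{eq:jac3} on $V_n$.

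Items (1) and (2) are a direct check. For $(n,1,0)$: $1\leq 1\leq n$ and $0\leq 0\leq\min(n,2n-2)$ give $n\geq 1$, while $c_{113}=\tfrac{2-n}{2}$ is never a positive odd integer, so $\alpha_{113}\equiv 0$. For $(n,1,1)$: $1\leq\min(n,2n-2)$ gives $n\geq 2$, and $c_{113}=\tfrac{4-n}{2}$ is an odd non-negative integer only when $n=2$ (value $1$), so $\alpha_{113}$ is free exactly for $n=2$. For $(n,1,3)$: $3\leq\min(n,2n-2)$ gives $n\geq 3$, and $c_{113}=\tfrac{8-n}{2}$ is odd and non-negative only for $n=6$. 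For $(n,3,1)$: $3\leq n$ together with $1\leq\min(n,2n-6)$ gives $n\geq 4$, and again $c_{113}=\tfrac{8-n}{2}$ is admissible only for $n=6$. These four computations reproduce precisely the ranges $n\geq 1,2,3,4$ and the threefolds $\lambda_{(n,k,j)}$ written in the statement (with the expected exceptions at $n=2$ and $n=6$).

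The substance is item (3), namely $\sum_{\circlearrowright}(u,(v,w)_{n_2})_{n_3}=0$ for all $u,v,w\in V_n$, which I would prove case by case. For $n_3=0$ (case a) the outer transvection is multiplication, $(u,(v,w)_1)_0=u\cdot(v,w)_1$, and the cyclic sum is a nonzero scalar times the determinant with rows $(u,v,w)$, $(u_x,v_x,w_x)$, $(u_y,v_y,w_y)$; by Euler's relation $xg_x+yg_y=(\deg g)g$ the first row is an $\ku[x,y]$-combination of the other two, so the determinant vanishes. For $(n_2,n_3)=(1,1)$ (case b) the iterated transvection $(u,(v,w)_1)_1$ equals, up to a scalar depending only on $n$ (well defined since $n\geq 2$), the nested Jacobian bracket $\{u,\{v,w\}\}$ with $\{f,g\}=f_xg_y-f_yg_x$, so the cyclic sum vanishes by the Jacobi identity of this Poisson bracket. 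For cases c) and d) I would invoke Gordan's identity (Definition~\ref{def:gordan}) in its cyclic form $[u,v,w,n,\alpha_1,\alpha_2,\alpha_3]^{*}=0$, choosing $(\alpha_1,\alpha_2,\alpha_3)$ so that the $i=0$ terms produce exactly $((v,w)_{n_2},u)_{n_3}$ up to sign; since both cases land in $V_{3n-8}$, a single well-chosen Gordan relation couples them, and the remaining $i>0$ contributions either target a degree too low to be nonzero or reduce to the cyclic sums already handled in cases a)–b). The main obstacle is exactly this last step: pinning down the Gordan parameters that isolate the cyclic sum one wants and verifying that the auxiliary transvections it generates vanish or telescope, with the binomial factors $\binom{n-\alpha_1-\alpha_3}{i}\binom{\alpha_2}{i}\big/\binom{2n-2\alpha_3-i+1}{i}$ tracked correctly; everything preceding it is mechanical.
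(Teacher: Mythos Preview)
Your treatment of the integer bounds and of the $\alpha_{113}$-arithmetic is correct and matches the paper's argument. Your proofs of identity~\eqref{eq:jac3} for cases a) and b) via the Euler--determinant trick and the Poisson--Jacobi identity for $\{f,g\}=f_xg_y-f_yg_x$ are correct and pleasantly more conceptual than the paper's approach, which instead writes down explicit Gordan identities ($[f,g,h,n,0,0,1]-[h,g,f,n,0,0,1]=0$ for a), and $[f,g,h,n,0,1,1]+[g,f,h,n,0,1,1]+[h,g,f,n,0,1,1]=0$ for b)) and checks they expand to what is required.

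The gap is in cases c) and d). It is not true that ``a single well-chosen Gordan relation couples them''. The paper handles $(n,3,1)$ via the symmetrised combination $[f,g,h,n,0,2,2]+[g,f,h,n,0,2,2]+[h,g,f,n,0,2,2]=0$, whose auxiliary $i>0$ terms involve $((\cdot,\cdot)_2,\cdot)_2$ and $((\cdot,\cdot)_4,\cdot)_0$ --- these are \emph{symmetric} inner transvections that cancel under this particular three-term symmetrisation, not expressions that reduce to the cyclic sums already treated in a) or b). Case $(n,1,3)$ is harder still: one genuinely needs a linear combination of \emph{two} different Gordan families, namely $[\cdot,\cdot,\cdot,n,0,1,3]^*$ together with the $[\cdot,\cdot,\cdot,n,0,2,2]$ combination above, weighted by the $n$-dependent coefficient $\tfrac{7n-9}{4n-6}$, before all residual transvections cancel (and the boundary case $n=3$ requires yet another relation, $[f,g,h,3,1,1,2]^*=0$). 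So your heuristic that ``the remaining $i>0$ contributions either target a degree too low to be nonzero or reduce to the cyclic sums already handled'' does not describe the actual mechanism; the cancellations are case-specific and do not telescope back to a) and b).
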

\begin{proof}
    We denote the different tuples in the general form $(n_1,n_2,n_3)$, so $m_1=V_{n_1}$, $m_2=V_{2n_1-2n_2}$ and $m_3=V_{3n_1-2n_2-2n_3}$. It is a straightforward computation that the module summand $W(n_1,n_2,n_3)=m_1\oplus m_2\oplus m_3$ is as described in all four items. Using Clebsch-Gordan’s formula from equation~\eqref{eq:clebsc-gordan} we can observe $m_2$ appears in $\Lambda^2 m_1$ and $m_3$ appears in $m_1 \otimes m_2$. Therefore, by construction, we only need to check Jacobi identity from equation~\eqref{eq:jac3}, for every $f,g,h \in m_1$. And this equality can be proved using Gordan's Identity from Definition~\ref{def:gordan} on expressions:
    \begin{align*}
    [f,g,h,n,0,0,1]&=((f,g)_1,h)_0-((f,h)_0,g)_1-\frac{1}{2} ((f,h)_1,g)_0,\\
    [f,g,h,n,0,1,1]&=((f,g)_1,h)_1+\frac{1}{2}((f,g)_2,h)_0 - ((f,h)_1,g)_1 - \frac{1}{2} ((f,h)_2,g)_0,\\
    [f,g,h,n,0,2,2]&=((f,g)_2,h)_2 + ((f,g)_3,h)_1 + \frac{(n-2)(n-3)}{(2n-5)(2n-6)}((f,g)_4,h)_0\\ & \quad -((f,h)_1,g)_1 -((f,h)_3,g)_1 - \frac{(n-2)(n-3)}{(2n-5)(2n-6)}((f,h)_4,g)_0,\\
    [f,g,h,n,0,1,3]&=((f,g)_3,h)_1 + \frac{1}{2}((f,g)_4,h)_0 - ((f,g)_1,h)_3 - \frac{3}{2}((f,h)_2,g)_2\\ &\quad  -\frac{3(n-1)}{2(2n-3)}((f,h)_3,g)_1 - \frac{(n-1)}{4(2n-5)}((f,h)_4,g)_0.
    \end{align*}
    Depending on the different tuples, equation~\eqref{eq:jac3} is equivalent to the following identities (note that $(a,b)_k=(-1)^k(b,a)_k$ according to Lemma \ref{lem:transvections}):
    \begin{itemize}
        \item $(n_1,n_2,n_3)=(n,1,0)$ for $n \geq 1$: equation~\eqref{eq:jac3} follows from,
            \begin{equation*}
		    [f,g,h,n,0,0,1] - [h,g,f,n,0,0,1] = 0.
	        \end{equation*}
	   We note that $\alpha_{113}=0$ because $m_3=V_{3n-2}$ is not contained in $\Lambda^2m_1$.
	    \item $(n_1,n_2,n_3)=(n,1,1)$ for $n \geq 2$: equation~\eqref{eq:jac3} follows from,
            \begin{equation*}
		    [f,g,h,n,0,1,1] + [g,f,h,n,0,1,1] + [h,g,f,n,0,1,1] = 0.
		    \end{equation*}
		    In this case, $m_3=V_{3n-4} \subset \Lambda^2m_1$ implies $3n-4=2n-2k$ with $k$ odd. Then $2k=4-n\geq 0$ and therefore $\alpha_{113}=0$ if $n\geq 3$ and $n=2$ implies $k=1$ and any $\alpha_{113}$ is valid. 
		\item $(n_1,n_2,n_3)=(n,1,3)$ for $n \geq 4$: equation~\eqref{eq:jac3} follows from,
		\begin{multline*}
			[f,g,h,n,0,1,3]^* - \frac{7n-9}{4n-6}
		   \Big([f,g,h,n,0,2,2]\\-
			[g,f,h,n,0,2,2]-
			[h,g,f,n,0,2,2]\Big)= 0.
		\end{multline*}
		And it is equivalent to $[f,g,h,3,1,1,2]^* = 0$ if $n=3$. Here, $\alpha_{113}=0$ if $n\neq 6$ and $n=6$ implies $k=1$ and any $\alpha_{113}$ is valid.
	   \item $(n_1,n_2,n_3)=(n,3,1)$ for $n \geq 4$: here equation~\eqref{eq:jac3} is equivalent to $	[f,g,h,n,0,2,2] + [g,f,h,n,0,2,2] + [h,g,f,n,0,2,2] = 0$. \qedhere
    \end{itemize}
\end{proof}

Most of previous information on the Proposition \ref{prop:lie-table1} can be originally found distributed in several lemmas in~\cite[Secci\'on 2.3.1]{ivan}, work that has been revisited, sorted and extended to produce the mentioned proposition. From the algorithms, we also reach the following series of Lie algebras.

\begin{proposition}\label{prop:lie-table2}
   The $3$-tuples $(4n,2n+1,4n-3)$, $(4n+1,2n+1,4n)$, $(4n+2,2n+1,4n+1)$, $(4n+3,2n+1,4n+3)$ and $(4n+4,2n+1,4n+3)$ generate only algebras with $\mathbb{N}^+$-graded nilradical. So, the valid scalar threefolds are $\lambda_{(n_1,n_2, n_3)}=(\alpha_{112},0,\alpha_{123})$. The resulting $\lambda$-parametric families of $\fsl_2$-chained Lie algebras are:
    \begin{enumerate}[\quad a)]
        \item $\fg^{\rho,\lambda}_{(4n,2n+1,4n-3)}=\fsl_2(\ku)\oplus_\rho (V_{4n}  \oplus V_{4n-2}\oplus V_4)_\lambda$ for $n\geq 2$.
        \item $\fg^{\rho,\lambda}_{(4n,2n+1,4n)}  =\fsl_2(\ku)\oplus_\rho (V_{4n+1}\oplus V_{4n}  \oplus V_1)_\lambda$ for $n\geq 0$.
        \item $\fg^{\rho,\lambda}_{(4n,2n+1,4n+1)}=\fsl_2(\ku)\oplus_\rho (V_{4n+2}\oplus V_{4n+2}\oplus V_2)_\lambda$ for $n\geq 0$.
        \item $\fg^{\rho,\lambda}_{(4n,2n+1,4n+3)}=\fsl_2(\ku)\oplus_\rho (V_{4n+3}\oplus V_{4n+4}\oplus V_1)_\lambda$ for $n\geq 0$.
        \item $\fg^{\rho,\lambda}_{(4n,2n+1,4n+3)}=\fsl_2(\ku)\oplus_\rho (V_{4n+4}\oplus V_{4n+6}\oplus V_4)_\lambda$ for $n\geq 0$.
    \end{enumerate}
    And products in the nilradicals are given by $\alpha_{112}(\cdot,\cdot)_{n_2}$ and $\alpha_{123}(\cdot,\cdot)_{n_3}$.
\end{proposition}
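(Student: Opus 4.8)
\emph{Overall plan.} The plan is to proceed exactly as in the proof of Proposition~\ref{prop:lie-table1}, feeding each of the five $3$-tuples into Theorem~\ref{thm:chain} with $t=3$. Writing a tuple generically as $(n_1,n_2,n_3)$ and substituting into $m_1=V_{n_1}$, $m_2=V_{2n_1-2n_2}$, $m_3=V_{3n_1-2n_2-2n_3}$, a one-line arithmetic check reproduces the three $\fsl_2$-modules listed in a)--e). Since $n_2=2n+1$ is odd, the transvection $(\cdot,\cdot)_{n_2}$ is skew, so $p_{112}=\alpha_{112}(\cdot,\cdot)_{n_2}$ is an admissible non-zero map and Clebsch--Gordan~\eqref{eq:clebsc-gordan} gives $m_2\subseteq\Lambda^2 m_1$; checking the inequalities of Step~1 of Algorithm~\ref{alg:5chain} case by case yields $m_3\subseteq m_1\otimes m_2$, so $p_{123}=\alpha_{123}(\cdot,\cdot)_{n_3}\neq 0$ is likewise admissible. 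With $t=3$ the only Jacobi condition left inside the nilradical is the single identity~\eqref{eq:jac3}; granting it, Theorem~\ref{thm:chain} produces the $5$-element chain $0<m_3<m_2\oplus m_3<\fn<\fg$, and all that then remains is to read off that the nilradical is $\mathbb{N}^+$-graded.

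\emph{Forcing $\alpha_{113}=0$.} Next I would pin down $\alpha_{113}$, the coefficient of $p_{113}=\alpha_{113}(\cdot,\cdot)_{c_{113}}$ with $c_{113}=n_2+n_3-\tfrac{n_1}{2}$. By Lemma~\ref{lem:transvections} this map must be skew, so $c_{113}$ has to be an odd non-negative integer or else $\alpha_{113}=0$. For $(4n,2n+1,4n-3)$, $(4n+1,2n+1,4n)$, $(4n+3,2n+1,4n+3)$ and $(4n+4,2n+1,4n+3)$ one computes $c_{113}=4n-2$, $4n+\tfrac12$, $4n+\tfrac52$ and $4n+2$ respectively, which are even or non-integral, so $\alpha_{113}=0$ is forced outright. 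The tuple $(4n+2,2n+1,4n+1)$ is the delicate one: here $c_{113}=4n+1$ is odd, a non-zero $p_{113}$ is formally available, and since it does not appear in~\eqref{eq:jac3} it would still define a Lie algebra (for $n=0$ this is precisely the $\alpha_{113}\neq0$ branch of the $(2,1,1)$ case of Proposition~\ref{prop:lie-table1}); but then $[m_1,m_1]$ reaches into $\fn^3=m_3$, which a short degree count rules out for any positive $\mathbb{N}$-grading of $\fn$ (it would force $m_2$ and $m_3$ into the same homogeneous layer while $[m_1,m_2]=m_3$ demands the layer of $m_3$ to be strictly higher). Hence $\alpha_{113}=0$ is the only choice compatible with an $\mathbb{N}^+$-graded nilradical; with it $[m_i,m_j]\subseteq m_{i+j}$ for all $i,j$, so $\fn$ is Carnot with $\fn_{[i]}=m_i$ and the products are $\alpha_{112}(\cdot,\cdot)_{n_2}$, $\alpha_{123}(\cdot,\cdot)_{n_3}$, as claimed.

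\emph{The Jacobi identity, and the main obstacle.} The remaining and genuinely laborious step is to prove~\eqref{eq:jac3}, i.e.\ the vanishing of $(u,(v,w)_{n_2})_{n_3}+(v,(w,u)_{n_2})_{n_3}+(w,(u,v)_{n_2})_{n_3}$ for $u,v,w\in V_{n_1}$. Because $n_2$ is odd this cyclic sum is a fully alternating $\fsl_2$-trilinear map $V_{n_1}^{\times3}\to m_3$, hence an element of $\mathrm{Hom}_{\fsl_2}(\Lambda^3 V_{n_1},m_3)$; for the smallest members ($n=0$, which anyhow reduce to algebras already covered by Proposition~\ref{prop:lie-table1} or by the examples of the previous section) this space is zero and~\eqref{eq:jac3} is automatic. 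The hard part will be, for general $n$ in each of the five families, to expand the composite transvectants $((f,g)_{n_2},h)_{n_3}$ and $((f,h)_{n_3},g)_{n_2}$ and to exhibit the correct $\ku$-linear combination of Gordan identities $[f,g,h,n_1,0,\alpha_2,\alpha_3]$ of Definition~\ref{def:gordan} (with the indices chosen so that the inner and outer transvectant orders become $n_2$ and $n_3$) whose cyclic antisymmetrisation equals the sum above, exactly as in the four cases of Proposition~\ref{prop:lie-table1}. By contrast the module identification and the parity computation for $\alpha_{113}$ are routine; the weight of the proof is this family-by-family Gordan-identity bookkeeping.
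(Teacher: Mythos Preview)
Your outline follows the paper's own proof almost exactly: identify the modules via the standard formulas, argue $\alpha_{113}=0$, then reduce everything to the single Jacobi identity~\eqref{eq:jac3} and discharge it with Gordan identities. Two remarks are worth making.

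First, your treatment of $\alpha_{113}$ is actually \emph{more} careful than the paper's. The paper simply asserts that Clebsch--Gordan forces $\alpha_{113}=0$ in all five families, but your computation $c_{113}=4n+1$ for the tuple $(4n+2,2n+1,4n+1)$ is correct, and indeed $V_2\subset\Lambda^2V_{4n+2}$ for every $n\ge0$, so a non-zero $p_{113}$ \emph{is} available in family~c). Your grading argument is the right way to exclude it: with $\alpha_{113}\neq0$ the bracket $[m_1,m_1]$ meets both $m_2$ and $m_3$, and since for $n\ge1$ the modules $V_{4n+2}$ and $V_2$ are non-isomorphic one cannot regrade them into a common homogeneous piece, so no Carnot grading survives. (The $n=0$ instance is the $(2,1,1)$ exception already flagged in Proposition~\ref{prop:lie-table1}.)

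Second, the part you label ``the main obstacle'' is exactly where the paper does the real work: it writes down, for each of the five families, an explicit $\ku$-linear combination of Gordan brackets $[f,g,h,n_1,\alpha_1,\alpha_2,\alpha_3]^*$ whose vanishing is equivalent to~\eqref{eq:jac3}. For instance, family~b) reduces to the single identity $[f,g,h,4n+1,2n,2n+1,2n]^*=0$, while families~a), d), e) require a specific two-term combination with an explicit rational coefficient depending on~$n$. Your proposal stops short of producing these combinations; finding the right $\alpha_i$ and coefficients is precisely the bookkeeping you would still have to carry out, and it is not entirely mechanical since the choice of which Gordan identities to combine is not unique.
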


\begin{proof}
    We follow the notation introduced in the proof of  Proposition \ref{prop:lie-table1}. It is a straightforward computation that the module summand $W(n_1,n_2,n_3)=m_1\oplus m_2\oplus m_3$ is as described in all items in the list. Using Clebsch-Gordan’s formula from equation~\eqref{eq:clebsc-gordan} we check that $\alpha_{113}=0$ in all the cases, $m_2$ appears in $\Lambda^2 m_1$ and $m_3$ appears in $m_1 \otimes m_2$. So, to establish the result, it only remains to prove Jacobi identity from equation~\eqref{eq:jac3} for every $f,g,h \in m_1$.
    As in Proposition \ref{prop:lie-table1}, we proceed to check using Gordan identities:
    \begin{itemize}
        \item $(n_1,n_2,n_3)=(4n,2n+1,4n-3)$, for $n\geq 2$: equation~\eqref{eq:jac3} is just
	\begin{equation*}
		[f,g,h,4n,2n-2,2n+2,2n-2]^* -
	   2[f,g,h,4n,2n-2,2n+1,2n-1]^* = 0
	\end{equation*}
    \end{itemize}
        For the other cases $n\geq 0$ is fixed and, 
    \begin{itemize}
        \item $(n_1,n_2,n_3)=(4n+1,2n+1,4n)$: equation~\eqref{eq:jac3} follows from the identity $[f,g,h,4n+1,2n,2n+1,2n]^*=0$.
        
        \item $(n_1,n_2,n_3)=(4n+2,2n+1,4n+1)$: equation~\eqref{eq:jac3} is equivalent to $[f,g,h,4n+2,2n,2n+1,2n+1]^*=0$.
        
        \item $(n_1,n_2,n_3)=(4n+3,2n+1,4n+3)$: equation~\eqref{eq:jac3} is just
	\begin{multline*}
		[f,g,h,4n+3,2n-1,2n,2n+3]^*\\+\frac{4n+3}{2n}[f,g,h,4n+3,2n-1,2n+3,2n]^*=0
	\end{multline*}
	for $n \geq 1$. While case $n=0$ is just item c) for $n=3$ in Proposition~\ref{prop:lie-table1}.
        \item $(n_1,n_2,n_3)=(4n+4,2n+1,4n+3)$: equation~\eqref{eq:jac3} is obtained from \begin{multline*}
		[f,g,h,4n+4,2n-2,2n+4,2n]^* -
		[f,g,h,4n+4,2n-2,2n,2n+4]^* \\-
		\frac{3(n+2)(2n+3)(7n+10)}{4n(4n+1)(6 n+7)}
		\bigg(
		[f,g,h,4n+4,2n-2,2n+4,2n]^* \\-
		\frac{5(n+2)}{7n+10}
		[f,g,h,4n+4,2n-2,2n+3,2n+1]^* \bigg) = 0
	\end{multline*}
	for $n \geq 1$. While case $n=0$ is item c) for $n=4$ in Proposition~\ref{prop:lie-table1}.\qedhere
    \end{itemize}
\end{proof}

\begin{remark}
As seen in some proofs, some algebras are repeated in Propositions \ref{prop:lie-table1} and \ref{prop:lie-table2}. The cases $n=0$ from Proposition \ref{prop:lie-table2} in items b), c), d) and e) coincide with the cases from Proposition \ref{prop:lie-table1} in item a) for $n=1$, item b) for $n=2$, and item c) for $n=3,4$.
\end{remark}

\begin{remark}\label{rmk:no-cadena}
Other $3$-tuples that do not produce chain ideal Lie algebras are:
\begin{alignat*}{5}
&(n,1,2)_{n\geq 2},\quad
&(n,1,4)_{n\geq 4},\qquad
&(n,1,5)_{n\geq 5},\qquad
&(n,1,6)_{n\geq 6},\\
&(n,3,0)_{n\geq 3},\quad
&(2n,2n-1,1)_{n\geq 3},\qquad
&(2n,2n-1,0)_{n\geq 2},\qquad
&(n,3,2)_{n\geq 3},\\
&&(2n+1,2n+1,0)_{n\geq 1},\qquad
&(2n,2n-1,2)_{n\geq 3}.
\end{alignat*}
We can check why and where they do not work as chains in~\cite[Secci\'on 2.3.1]{ivan}.
\end{remark}

\begin{example}
The $3$-tuples $(n,1,2)_{n\geq 2}$ or $(2n+1,2n+1,0)_{n\geq 1}$ do not produce Lie algebra structures with chain ideal lattice. In the first case, the Lie product must be induced on $V_n\oplus V_{2n-2}\oplus V_{3n-6}$ by using $\lambda=(\alpha_{112},0,\alpha_{123})$. But Jacobi identity fails (unless $\alpha_{112}\alpha_{123}=0$):
    \begin{multline*}
    J(x^n,yx^{n-1},y^2x^{n-2})=\sum_{cyclic}((x^n,yx^{n-1})_1,y^2x^{n-2})_2=\\\frac{9n-12}{n^2(2n-3)(n-1)}x^{3n-6}
    \end{multline*}
For the second tuple, the vector space is $V_{2n+1}\oplus V_0\oplus V_{2n+1}$ and Jacobi identity also fails:
\begin{multline*}
J(yx^{2n},y^2x^{2n-1})=\sum_{cyclic}((yx^{2n},y^2x^{2n-1})_{2n+1},y^{2n}x)_0=\frac{1}{2n+1}y^2x^{2n-1}.
\end{multline*}
\end{example}

\subsection{Chains with 6 ideals}

Now, we have to prove the existence results inspired by Table~\ref{tab:chains6G}. Before proving them, we are going to introduce simplified notations as in all following results $m_1 = V_n$ and $m_2 = V_{2n-2}$. So, we will write $[h,f,g,n,\alpha_1, \alpha_2, \alpha_3]_1$, $[f,h,g,n,\alpha_1, \alpha_2, \alpha_3]_2$ and $[f,g,h,n,\alpha_1, \alpha_2, \alpha_3]_3$ instead of
	 	\begin{equation*}
			\begin{bmatrix}
				h & f & g\\
				2n-2 & n & n\\
				\alpha_1 & \alpha_2 & \alpha_3
			\end{bmatrix},
			\qquad
			\begin{bmatrix}
				f & h &g\\
				n & 2n-2 & n\\
				\alpha_1 & \alpha_2 & \alpha_3
			\end{bmatrix},
			\qquad
			\begin{bmatrix}
				f & g & h\\
				n & n & 2n-2 \\
				\alpha_1 & \alpha_2 & \alpha_3
			\end{bmatrix},
		\end{equation*}
	respectively, and
		\begin{align*}
		\begin{split}
			[f,g,h,n,\alpha_1,\alpha_2,\alpha_3]_\star &=
			[f,g,h,n,\alpha_1,\alpha_2,\alpha_3]_3
			-[g,f,h,n,\alpha_1,\alpha_2,\alpha_3]_3
			\\&
			+[g,h,f,n,\alpha_1,\alpha_2,\alpha_3]_2
			-[h,g,f,n,\alpha_1,\alpha_2,\alpha_3]_1
			\\&
			+[h,f,g,n,\alpha_1,\alpha_2,\alpha_3]_1
			-[f,h,g,n,\alpha_1,\alpha_2,\alpha_3]_2,
		\end{split}
		\end{align*}
		for $f,g \in V_n$ and $h\in V_{2n-2}$.
		
	Note, as first seen in Algorithm~\ref{alg:6chain}, every chained Lie algebra of length 6 depends on an $\alpha$ parameter which imposes restrictions over $\alpha_{224}$  for every not null $\alpha_{112}$, $\alpha_{123}$, $\alpha_{134}$ as seen in equation~\eqref{eq:alpha}.

\begin{proposition}\label{prop:lie-table3}
The $4$-tuples $(n,1,0,0), (n,1,0,2)$ and $(n,1,1,1)$ generate the following parametric families of $\fsl_2$-chained Lie algebras: 
\begin{enumerate}[\quad a)]
    \item $\fg^{\rho,\lambda}_{(n,1,0,0)}=\fsl_2(\ku)\oplus_\rho(V_{n}\oplus V_{2n-2}\oplus V_{3n-2}\oplus V_{4n-2})_\lambda$ for $n\geq 1$,
    \item $\fg^{\rho,\lambda}_{(n,1,0,2)}=\fsl_2(\ku)\oplus_\rho(V_{n}\oplus V_{2n-2}\oplus V_{3n-2}\oplus V_{4n-6})_\lambda$ for $n\geq 2$,
    \item $\fg^{\rho,\lambda}_{(n,1,1,1)}=\fsl_2(\ku)\oplus_\rho(V_{n}\oplus V_{2n-2}\oplus V_{3n-4}\oplus V_{4n-6})_\lambda$ for $n\geq 2$.
\end{enumerate}
Along here, $\lambda_{(n_1,n_2,n_3,n_4)}=(\alpha_{112}, 0, 0, \alpha_{123}, 0, \alpha_{134},\alpha_{224})$, where
\begin{alignat*}{2}
    \alpha_{224} &= 0 &\text{ when } \lambda &= \lambda_{(n,1,0,0)},\\
    \alpha_{224} &= \frac{4(4n-3)\,\alpha_{123}\,\alpha_{134}}{3(3n-2)\,\alpha_{112}}\quad &\text{ when } \lambda &= \lambda_{(n,1,0,2)},\\
    \alpha_{224} &= \frac{(2n-2)\,\alpha_{123}\,\alpha_{134}}{(3n-4)\,\alpha_{112}} &\text{ when } \lambda &= \lambda_{(n,1,1,1)},
\end{alignat*}
defines the product in the $\mathbb{N}$-graded nilradical given by $\alpha_{112}(\blank,\blank)_{n_2}$,  $\alpha_{123}(\blank,\blank)_{n_3}$,  $\alpha_{134}(\blank,\blank)_{n_4}$ and $\alpha_{224}(\blank,\blank)_{n_3+n_4-n_2}$. We also have the not necessarily graded particular cases, where $\alpha_{113}$, $\alpha_{114}$, $\alpha_{124}$ are not necessarily zero, given by the sevenfolds $\lambda_{(n,1,k,j)}$,
\begin{align*}
    \lambda_{(2,1,0,2)}&=(\alpha_{112}, 0, \alpha_{114}, \alpha_{123}, \alpha_{124}, \alpha_{134},\alpha_{224}),\\
    \lambda_{(4,1,0,2)}&=(\alpha_{112}, 0, 0,  \alpha_{123}, \alpha_{124},  \alpha_{134}, \alpha_{224}),\\
    \lambda_{(2,1,1,1)}&=(\alpha_{112}, \alpha_{113}, \alpha_{114}, \alpha_{123}, \alpha_{124}, \alpha_{134},\\
    \lambda_{(4,1,1,1)}&=(\alpha_{112}, 0, 0, \alpha_{123}, \alpha_{124}, \alpha_{134}, \alpha_{224}).
\end{align*}
For these four last parametric families, $\fg^{\rho,\lambda}_{(2,1,k,j)}$ and $\fg^{\rho,\lambda}_{(4,1,k,j)}$, the entry $\alpha_{224}$ and the irreducible decomposition are described as in the graded case, and the product in the nilradical is given by $\alpha_{112}(\cdot,\cdot)_1$, $\alpha_{11p}(\cdot,\cdot)_1$ for\footnote{In general, we have $\alpha_{113}(\cdot,\cdot)_{\frac{2k-n+2}{2}}$ and $\alpha_{114}(\cdot,\cdot)_{j+k-n+1}$. But, as we only have non null $\alpha_{113}$ and $\alpha_{114}$ for some $\lambda$ we can simplify to those which have them.} $p=3,4$, $\alpha_{123}(\cdot,\cdot)_k$, $\alpha_{124}(\cdot,\cdot)_{\frac{4-n}{2}}$, $\alpha_{134}(\cdot,\cdot)_j$, $\alpha_{224}(\cdot,\cdot)_1$.
\end{proposition}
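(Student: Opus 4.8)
The plan is to mirror the proofs of Propositions~\ref{prop:lie-table1} and~\ref{prop:lie-table2}, now with $t=4$: by Theorem~\ref{thm:chain} and the case analysis of Algorithm~\ref{alg:6chain}, everything reduces to the $\fsl_2(\ku)$-module decomposition of $\fsl_2(\ku)\oplus_\rho W$ together with a short finite list of Jacobi identities inside the nilradical $N=m_1\oplus m_2\oplus m_3\oplus m_4$, which I would settle with Gordan's identities (Definition~\ref{def:gordan}).

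First I would compute $m_1,\dots,m_4$ from~\eqref{eq:mi} for each tuple and check the claimed decompositions ($(n,1,0,0)$ gives $V_n\oplus V_{2n-2}\oplus V_{3n-2}\oplus V_{4n-2}$, and similarly for the other two). Then, via Clebsch-Gordan~\eqref{eq:clebsc-gordan}, I would verify $m_2\subseteq\Lambda^2m_1$, $m_{i+1}\subseteq m_1\otimes m_i$ for $i=2,3$, and $m_4\subseteq m_2\otimes m_2$; the same bookkeeping shows that for generic $n$ the modules $V_{3n-2}$, $V_{4n-2}$, $V_{4n-6}$ miss $\Lambda^2m_1$ and $m_1\otimes m_2$, forcing $\alpha_{113}=\alpha_{114}=\alpha_{124}=0$, with the only exceptions appearing at $n=2$ and $n=4$, where a boundary summand of the relevant tensor product coincides with the right module. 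This step also records $c_{224}=n_3+n_4-n_2$, which equals $-1$ for $(n,1,0,0)$ --- hence $\alpha_{224}=0$ --- and $1$ for $(n,1,0,2)$ and $(n,1,1,1)$, in which cases $p_{224}=\alpha_{224}(\cdot,\cdot)_1$ and $\alpha_{224}$ remains to be determined. Once this is done, Theorem~\ref{thm:chain} guarantees that any parameter choice satisfying Jacobi yields a Lie algebra whose ideals form the asserted $6$-chain.

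With the module data in place, only Jacobi inside $N$ is left, and Algorithm~\ref{alg:6chain} splits it into the two scenarios of~\eqref{eq:jacobiComplete}. For three vectors in $m_1$, in the graded regime ($\alpha_{113}=\alpha_{114}=\alpha_{124}=0$) the identity collapses to~\eqref{eq:jac3} with $(n_1,n_2,n_3)$ equal to $(n,1,0)$ for parts a) and b), and $(n,1,1)$ for part c); these were already proved in Proposition~\ref{prop:lie-table1} a) and b), so nothing new is needed. The genuinely new part is the second scenario --- two vectors $f,g\in m_1$ and one $h\in m_2$ --- which is exactly~\eqref{eq:cond42}. For $(n,1,0,0)$ this is trivial, since $\alpha_{224}=0$ and $0$-transvections are just commutative polynomial multiplication. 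For $(n,1,0,2)$ and $(n,1,1,1)$ I would expand~\eqref{eq:cond42} with $f,g\in V_n$, $h\in V_{2n-2}$ and rewrite it, using the $[\,\cdot,\cdot,\cdot,n,\alpha_1,\alpha_2,\alpha_3]_\star$ notation fixed before the statement, as an explicit $n$-dependent linear combination of Gordan identities with one entry of degree $2n-2$; matching coefficients then yields a linear system whose unique solution forces $\alpha=\alpha_{224}\alpha_{112}/(\alpha_{123}\alpha_{134})$ to be $\frac{4(4n-3)}{3(3n-2)}$ in case b) and $\frac{2n-2}{3n-4}$ in case c), equivalently the stated values of $\alpha_{224}$. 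I expect this to be the main obstacle: guessing the right combination of Gordan identities --- the coefficients now depend on $n$ in a nonobvious way --- and checking that $\alpha$ is genuinely pinned down (rather than leaving a one-parameter family or producing an inconsistency).

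Finally, for the exceptional values $n=2$ and $n=4$, I would redo both scenarios keeping the now-admissible $\alpha_{113},\alpha_{114},\alpha_{124}$. The extra components of the Jacobi identity either vanish identically --- the relevant triple transvections land in a zero module or degenerate for such small $n$ --- or reduce to conditions already met, so those parameters stay free; and since the $\alpha_{224}$-term only couples to $p_{112}$, $p_{123}$, $p_{134}$, the value of $\alpha_{224}$ found above is unaffected, which produces the sevenfolds $\lambda_{(2,1,k,j)}$ and $\lambda_{(4,1,k,j)}$ exactly as listed, with the nilradical products read off from the transvection indices $c_{ijk}$ computed in the first step.
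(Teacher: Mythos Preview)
Your proposal is correct and follows essentially the same route as the paper: reduce to Theorem~\ref{thm:chain} and Algorithm~\ref{alg:6chain}, handle the three-vectors-in-$m_1$ Jacobi by quoting Proposition~\ref{prop:lie-table1}, and settle equation~\eqref{eq:cond42} via Gordan identities with one entry of degree $2n-2$, extracting the value of $\alpha$ along the way. One small caveat: the paper's actual Gordan combinations for cases b) and c) are \emph{not} of the symmetric $[\,\cdot,\cdot,\cdot,n,\alpha_1,\alpha_2,\alpha_3]_\star$ type but rather asymmetric linear combinations of the individual $[\,\cdot\,]_1,[\,\cdot\,]_2,[\,\cdot\,]_3$ brackets (with $n$-dependent coefficients such as $\frac{14n-18}{9n-12}$), so when you carry this out you should allow arbitrary coefficients on each of the six subscripted Gordan brackets rather than restricting to the $\star$-sum.
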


\begin{proof}
According to Proposition \ref{prop:lie-table1} and the notation in its proof, it is easily check that the module summand $W(n_1,n_2,n_3,n_4)=m_1\oplus m_2\oplus m_3\oplus m_4$ is as described in the three items.

Assume first $(n_1,n_2,n_3,n_4)=(n,1,0,0)$ and look at scalar entries $\alpha_{113}$, $\alpha_{114}$, $\alpha_{124}$ and $\alpha_{224}$ of $\lambda_{(n,1,0,0)}$. Using Clebsch-Gordan’s formula from equation~\eqref{eq:clebsc-gordan} we check that $\alpha_{113}=\alpha_{114}=\alpha_{124} =\alpha_{224}=0$ because modules $m_3=V_{3n-2}$ and $m_4=V_{4n-2}$ are not contained in $\Lambda^2m_1$ and $m_4$ is not contained in either $m_1\otimes m_2$ or $m_2\otimes m_2$. Proposition~\ref{prop:lie-table1} says tuple $(n,1,0)$ generates a chain. So our case is reduced to checking if $m_4 = V_{4n-2}$ appears in $m_1 \otimes m_3$ decomposition, which it is true; and studying  Jacobi identity for two elements in $m_1$ and another in $m_2$. But this last condition, seen in equation~\eqref{eq:cond42}, is equivalent to $[f, g, h, n, 0, 0, 0]_3=0$ for $n\geq 1$, $f,g \in V_n$ and $h \in V_{2n-2}$.

Suppose now $(n_1,n_2,n_3, n_4)=(n,1,0,2)$, and note that $\alpha_{113}=0$ and $\alpha_{114}\neq 0$ (respectively $\alpha_{124}\neq 0$) only if $n=2$ (respectively $n=2,4$). For the $\mathbb{N}$-graded condition $\alpha_{113}=\alpha_{114}=\alpha_{124}=0$, Proposition~\ref{prop:lie-table1} says that  tuple $(n,1,0)$ generates a chain. So this case is reduced to checking if $m_4 = V_{4n-6}$ appears in $m_1 \otimes m_3$ decomposition, which it is true; and studying Jacobi identity for two elements in $m_1$ and another in $m_2$. But this last condition, seen in equation~\eqref{eq:cond42}, is equivalent to
		\begin{multline*}
			[h, f, g, n, 0, 2, 0]_1 +
			[f, h, g, n, 0, 2, 0]_2 - 
			[h, g, f, n, 0, 2, 0]_1 \\- 
			[g, h, f, n, 0, 2, 0]_2 +
			\frac{14n-18}{9n-12}\big(
			[f, g, h, n, 0, 2, 0]_3 \\-
			[g, f, h, n, 0, 2, 0]_3\big) +
			\frac{(n-1)(2n-4)}{(3n-4)(3n-2)} G = 0
		\end{multline*}
		for $n\geq 2$, $f,g \in V_n$ and $h \in V_{2n-2}$, where
		\begin{equation*}
			G=\frac{2n-3}{n-1}[h,f,g,n,0,1,1]_1+[g,h,f,n,0,1,1]_2+[f,g,h,n,0,1,1]_3,
		\end{equation*}
		which will appear again in our final case
		$(n_1,n_2,n_3, n_4)=(n,1,1,1,1)$. By reapplying  Proposition~\ref{prop:lie-table1}, tuple $(n,1,1)$ generates a chain. So, assuming $\alpha_{113}=\alpha_{114}=\alpha_{124}=0$, our case is reduced to checking if $m_4 = V_{4n-6}$ appears in $m_1 \otimes m_3$ decomposition, which it is true; and studying Jacobi identity for two elements in $m_1$ and another in $m_2$. But this last condition, seen in equation~\eqref{eq:cond42}, is equivalent to 
		\begin{equation*}
			\frac{2n-3}{n-1}[h,f,g,n,0,1,1]_1+[g,h,f,n,0,1,1]_2+[f,g,h,n,0,1,1]_3 = 0,
		\end{equation*}
		for $n\geq 2$, $f,g \in V_n$ and $h\in V_{2n-2}$. Here also appear the particular tuples $(2,1,1,1)$ and $(4,1,1,1)$ for which $(\alpha_{113},\alpha_{114}, \alpha_{12,4})\neq (0,0,0)$ and $\alpha_{113}=\alpha_{114}=0$ but $(\alpha_{113},\alpha_{114}, \alpha_{124})\neq 0$. Exceptions $(2,1,0,1)$, $(4,1,1,1)$, $(2,1,1,1)$ and $(4,1,1,1)$ are covered by the particular sevenfold $\lambda$ at the end of the proposition.
\end{proof}

\subsection{Overview}

Finally, to sum up, we will see the relation among all the previously described chains. As on any $t$-chain we can take the quotient by their last $\mathfrak{sl}_2$-modules to obtain smaller chains, there is a strong relation between chains. This is idea is expressed in the following lemma.
\begin{lemma}
The tuple $(n_1, n_2, \dots, n_t)$ could form a $\mathfrak{sl}_2$-chained Lie algebra if $(n_1, n_2, \dots, n_k)$ forms a valid $\mathfrak{sl}_2$-chained Lie algebra for every $k \leq t$.
\end{lemma}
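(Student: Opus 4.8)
The plan is to read the conclusion ``$(n_1,\dots,n_t)$ could form a $\mathfrak{sl}_2$-chained Lie algebra'' as the assertion that the tuple clears the \emph{necessary} module-compatibility checks of Step~1 of Algorithm~\ref{alg:main}: that $W(n_1,\dots,n_t)=m_1\oplus\dots\oplus m_t$ is a well-defined $\mathbb{N}^+$-graded $\mathfrak{sl}_2$-module with $m_2\subseteq\Lambda^2 m_1$ and $m_{i+1}\subseteq m_1\otimes m_i$ for $1\le i\le t-1$, so that by Lemma~\ref{lem:transvections} there exist nonzero invariant skew products $p_{1,i,i+1}=\alpha_{1,i,i+1}(\blank,\blank)_{n_{i+1}}$ realizing $m_{i+1}\subseteq[m_1,m_i]$. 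These containments are exactly the structural prerequisites demanded by Theorem~\ref{thm:chain}, and they are what separates a plausible candidate from a tuple the algorithm rejects outright; the weak verb ``could'' signals that the Jacobi identities of Step~2, i.e.\ equation~\eqref{eq:jacobiComplete}, still remain to be verified and so distinguish ``could form'' from the stronger ``forms a valid'' used in the hypothesis.

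First I would record that each module $m_i=V_{i n_1-2\sum_{j=2}^i n_j}$ depends only on the initial segment $n_1,\dots,n_i$, so the modules of any prefix are literally the first modules of the full tuple. Consequently the Step~1 inequality imposed on $n_{i+1}$, namely $0\le n_{i+1}\le\min(n_1,\dim m_i-1)$ together with $n_2$ odd and $1\le n_2\le n_1$, depends only on $n_1,\dots,n_i$ and therefore coincides with the constraint tested when the prefix $(n_1,\dots,n_{i+1})$ is processed. Thus the full list of Step~1 conditions for $(n_1,\dots,n_t)$ is exactly the union, over $k\le t$, of the Step~1 conditions of the prefixes $(n_1,\dots,n_k)$; the conditions are monotone and prefix-local. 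Since by hypothesis every such prefix forms a valid chained Lie algebra, each in particular clears its own Step~1 check, so the entire collection of inequalities holds for $(n_1,\dots,n_t)$ and the tuple is a legitimate candidate.

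Second I would note that this candidate is internally consistent at the lower levels. By Lemma~\ref{lem:transvections} every invariant product $p_{ijk}\colon m_i\otimes m_j\to m_k$ with $k\le t-1$ is the unique-up-to-scalar transvection attached to the triple $(m_i,m_j,m_k)$, hence is identical to the product already fixed in the valid $(t-1)$-prefix; the structure constants $\alpha_{ijk}$ with $k\le t-1$ may therefore be taken over verbatim, and every instance of \eqref{eq:jacobiComplete} whose terms all stay below degree $t$ coincides with a relation already checked in that prefix, so it holds automatically. The only genuinely new data in passing from the $(t-1)$-prefix to $(n_1,\dots,n_t)$ are the products into the top module $m_t$, i.e.\ the constants $\alpha_{ij t}$, whose existence as invariant maps is again supplied by Lemma~\ref{lem:transvections}.

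The main obstacle, and the reason the statement asserts only that the tuple \emph{could} form a chain, is that the remaining Jacobi identities are \emph{not} prefix-local. A triple $J(u,v,w)$ with $u\in m_i$, $v\in m_j$, $w\in m_k$ acquires a degree-$t$ component as soon as $i+j+k\le t$, and that component couples the new constants $\alpha_{ij t}$ with transvection data depending on $n_4,\dots,n_t$; for example the mixed identity \eqref{eq:cond42} for two arguments in $m_1$ and one in $m_2$ already constrains $n_4$ and is invisible to the $3$-prefix, so no amount of prefix validity can force it. Hence the honest content I would extract is that clearing all prefix checks guarantees module compatibility and low-degree consistency, leaving precisely the top-degree instances of \eqref{eq:jacobiComplete}, that is Step~2 of Algorithm~\ref{alg:main}, as the sole possible obstruction to an actual $\mathfrak{sl}_2$-chained Lie structure.
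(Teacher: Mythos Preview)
Your reading of the lemma diverges from the paper's intent. The phrasing is admittedly awkward, but the paper's own proof and the sentence immediately after the lemma (``if $(n_1,\dots,n_k)$ does not produce any valid chain, then $(n_1,\dots,n_k,\dots,n_t)$ would never produce a valid chain'') make clear that what is meant is a \emph{necessary} condition: $(n_1,\dots,n_t)$ could form an $\mathfrak{sl}_2$-chained Lie algebra \emph{only if} every prefix $(n_1,\dots,n_k)$ already does. The paper's proof is then a one-line quotient argument: if $L=\mathfrak{sl}_2\oplus N$ with $N=m_1\oplus\dots\oplus m_t$ is a valid chained algebra, then $L/N^{k+1}\cong\mathfrak{sl}_2\oplus m_1\oplus\dots\oplus m_k$ is again a Lie algebra with the required chain of ideals, so the prefix is valid. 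The contrapositive is exactly what is used to prune the tree in Figure~\ref{fig:tree}.

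You instead parsed ``could form \dots\ if'' as a sufficient condition for passing Step~1 of Algorithm~\ref{alg:main}, and built an elaborate prefix-locality analysis of the module inequalities together with a discussion of which Jacobi identities carry over. That analysis is internally coherent, but it addresses the wrong direction and a much weaker claim; note also that with $k\le t$ the hypothesis as literally written already includes the full tuple, so under your reading the statement collapses. The paper neither needs nor uses any of the Step~1 locality or the transvection-uniqueness considerations here: the entire content is that a quotient of a Lie algebra by an ideal is a Lie algebra, and that $N^{k+1}=m_{k+1}\oplus\dots\oplus m_t$ is such an ideal.
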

\begin{proof}
	If $(n_1, n_2, \dots, n_t)$ forms a valid $\mathfrak{sl}_2$-chained Lie algebra $L = \mathfrak{sl}_2 \oplus N$ for $N = m_1 \oplus m_2 \oplus \dots \oplus m_t$, then $L/N^{k+1} = \mathfrak{sl}_2 \oplus m_1 \oplus m_2 \oplus \dots \oplus m_k$ would be a Lie algebra for every $k$.
\end{proof}
This is the same as saying that, if $(n_1, n_2, \dots, n_k)$ does not produce any valid chain, then $(n_1, n_2, \dots, n_k, \dots, n_t)$ would never produce a valid chain.
    \begin{example}
    From Remark \ref{rmk:no-cadena}, tuples $(n,1,2,*)_{n\geq 2}$ or $(2n+1,2n+1,0,*)_{n\geq 1}$ do not produce Lie algebras with $t$-chain ideal lattice for $t\geq 4$.
    \end{example}
This result is interesting for creating a tree-dependency between these chains, which can be seen in Figure~\ref{fig:tree}.

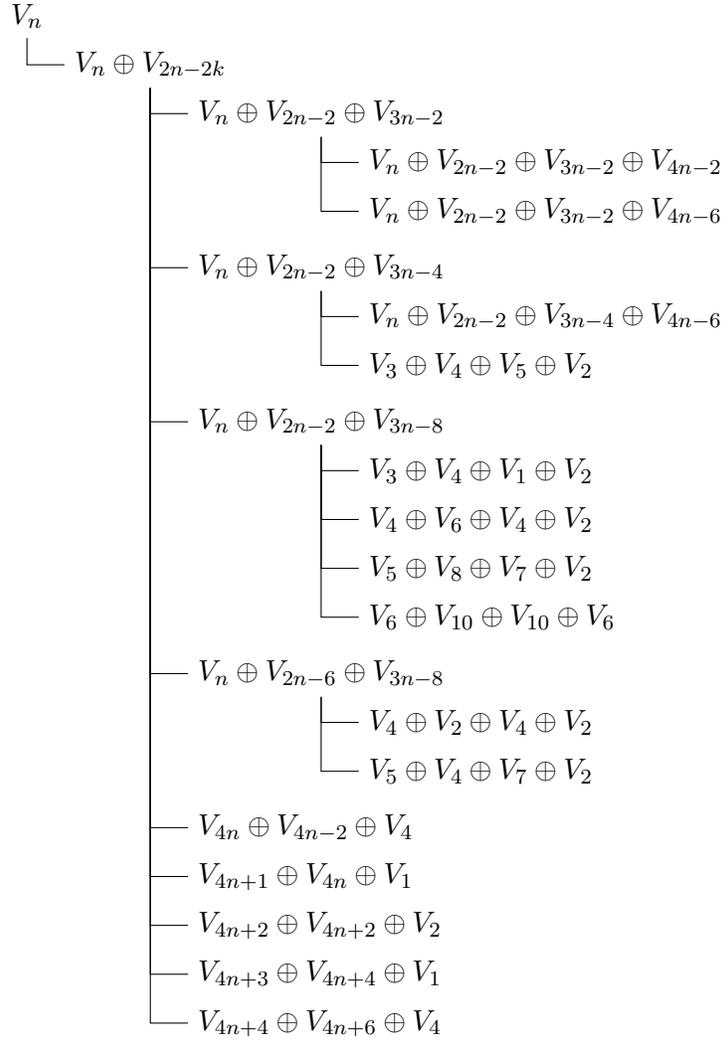
\begin{figure}[!ht]
\centering
\begin{tikzpicture}
[   
    level 1/.style = {},
    level 2/.style = {},
    level 3/.style = {},
    level 4/.style = {},
    every node/.append style = {anchor = west},
    grow via three points={one child at (0.5,-0.65) and two children at (0.5,-0.65) and (0.5,-1.3)},
    edge from parent path={(\tikzparentnode\tikzparentanchor) |- (\tikzchildnode\tikzchildanchor)},
]
\node{$V_{n}$}
	child{node{$V_{n} \oplus V_{2n-2k}$}
		child{node{$V_{n} \oplus V_{2n-2} \oplus V_{3n-2}$}
			child{node{$V_{n} \oplus V_{2n-2} \oplus V_{3n-2} \oplus V_{4n-2}$}}
			child{node{$V_{n} \oplus V_{2n-2} \oplus V_{3n-2} \oplus V_{4n-6}$}}
		}
		child{node[yshift = -0.65*2cm-0.1cm]{$V_{n} \oplus V_{2n-2} \oplus V_{3n-4}$}
			child{node{$V_{n} \oplus V_{2n-2} \oplus V_{3n-4} \oplus V_{4n-6}$}}
			child{node{$V_{3} \oplus V_{4} \oplus V_{5} \oplus V_{2}$}}
		}
		child{node[yshift = -0.65*4cm-0.2cm]{$V_{n} \oplus V_{2n-2} \oplus V_{3n-8}$}
			child{node{$V_{3} \oplus V_{4} \oplus V_{1} \oplus V_{2}$}}
			child{node{$V_{4} \oplus V_{6} \oplus V_{4} \oplus V_{2}$}}
			child{node{$V_{5} \oplus V_{8} \oplus V_{7} \oplus V_{2}$}}
			child{node{$V_{6} \oplus V_{10} \oplus V_{10} \oplus V_{6}$}}
		}
		child{node[yshift = -0.65*8cm-0.3cm]{$V_{n} \oplus V_{2n-6} \oplus V_{3n-8}$}
			child{node{$V_{4} \oplus V_{2} \oplus V_{4} \oplus V_{2}$}}
			child{node{$V_{5} \oplus V_{4} \oplus V_{7} \oplus V_{2}$}}
		}
		child{node[yshift = -0.65*10cm-0.4cm]{$V_{4n  } \oplus V_{4n-2} \oplus V_{4}$}}
		child{node[yshift = -0.65*10cm-0.4cm]{$V_{4n+1} \oplus V_{4n  } \oplus V_{1}$}}
		child{node[yshift = -0.65*10cm-0.4cm]{$V_{4n+2} \oplus V_{4n+2} \oplus V_{2}$}}
		child{node[yshift = -0.65*10cm-0.4cm]{$V_{4n+3} \oplus V_{4n+4} \oplus V_{1}$}}
		child{node[yshift = -0.65*10cm-0.4cm]{$V_{4n+4} \oplus V_{4n+6} \oplus V_{4}$}}
	};
\end{tikzpicture}
\caption{Relationships of $\mathfrak{sl}_2$-chained Lie algebras up to length 6.}
\label{fig:tree}
\end{figure}

\section{Final comments}\label{FC}
The notion of \emph{quasi-cyclic or homogeneous or Carnot} Lie algebra is introduced in 1963 by G. Leger \cite{leger1963derivations}. Up to isomorphisms, any  quasi-cyclic Lie algebra is a quotient of a free nilpotent Lie algebra by some  homogeneous ideal. The variety of quasi-cyclic Lie algebras includes free nilpotent, generalised Heisenberg and filiforms among others Lie algebras. Even more, quasi-cyclic Lie algebras of type $d$ are the class of nilpotent Lie algebras that contain a minimal generator set $\{e_1,\dots, e_d\}$ such that the correspondence $e_i\mapsto e_i$ extends to a derivation of $\fn$ (see~\cite[Corollary 1]{johnson1975homogeneous}). Note that such a derivation is invertible.

An automorphism of a real Lie algebra is called \emph{expanding authomorphism} if it is semisimple  with eigenvalues greater than $1$ in absolute value. According to \cite{dyer1970nilpotent}, quasi-cyclic Lie algebras admits expanding automorphisms, but the converse is false. In fact, real quasi-cyclic Lie algebras are those Lie algebras that admit \emph{grading automorphisms} \cite{johnson1975homogeneous}. And following \cite[Theorem 3.1 and Theorem 3.3]{dere2017gradings} (see also \cite{cornulier2016gradings}), the class of real Lie algebras admitting expanding automorphisms is just the class of positive graded Lie algebras (positive naturally graded along this paper) which is bigger than the quasi-cyclic class. In the realm of nilpotent Lie groups, the existence of an expanding map (respectively a non-trivial self-cover) in an infra-nilmanifold modeled on a Lie group $G$ is equivalent to the fact that the real algebra $\text{Lie}(G)$ admits a positive grading (respectively a naturally and non-trivial grading). Expanding automorphisms of real Lie algebras are hyperbolic (maps without eigenvalues $\pm 1$), and Lie algebras admitting \emph{hyperbolic automorphisms} are nilpotent (see~\cite[Proposition 3.6]{smale1967differentiable}).

 Any $3$-step nilpotent Lie algebra can be endowed with a \emph{complete affine structure} (check~\cite{scheuneman1974affine}). In the context of Lie algebras this concept is equivalent to that of \emph{left-symmetric structure} (see~\cite{elduque1994transitive} and references therein). Any naturally graded real Lie algebra admits a left symmetric structure according to \cite[Theorem 3.1]{dekimpe2003expanding}.
 
 Along the paper we have built series of naturally graded Lie algebras of nilindex $4$ and $5$ whose derivation algebra contains a subalgebra isomorphic to $\fsl_2(\ku)$. The algebras are given by $t$-tuples $(n_1,\dots, n_t)$ for $t=3,4$ that encode their structure in an easy way: $$(n_1+1,2n_1-2n_2+1, \dots, 2n_1-2(n_2+\dots +n_t)+1)$$ is their general type and the transvections $\alpha_{1ij}(\cdot,\cdot)_{n_{i+1}}$ where $j=i+1$ let us describe their Lie bracket product. Among these algebras we point out $V_1\oplus V_0\oplus V_1$, general type $(2,1,2)$, that corresponds to $\fn_{2,3}$ and $V_1\oplus V_0\oplus V_1\oplus V_2$, general type $(2,1,2,3)$, and its mixed extension $\fsl_2(\ku)\oplus V_1\oplus V_0\oplus V_1\oplus V_2$. The first and the third algebras admit a non-degenerate invariant symmetric bilinear form (\emph{metric Lie structure}). We also note that their decompositions into irreducible modules follow a symmetric pattern which is caused by the existence of an invariant form. The second one is not a metric Lie algebra. The series $V_2\oplus \dots \oplus V_2$ ($n$-summands) of general type given by the $n$-tuple $(3,3,\dots,3)_{n\geq 1}$ and their mixed extensions $\fsl_2(\ku)\oplus V_2\oplus \dots \oplus V_2$ are metric Lie algebras (rescaling of structure constants may be required). These algebras are a particular case of the series of current Lie algebras $\fg_n(S)$, for $S$ simple, which are naturally graded and metrizable. The general type of $\fn(\fg_n(S))$ is $(m,\dots, m)_{n\geq 1}$, where $m$ is the dimension of the simple Lie algebra $S$.

\section*{Funding}
The authors have been supported by research grant MTM2017-83506-C2-1-P of `Ministerio de Econom\'ia, Industria y Competitividad, Gobierno de Espa\~na' (Spain) until 2022 and by grant PID2021-123461NB-C21, funded by MCIN/AEI/10.13039/501100011033 and by “ERDF A way of making Europe” since then. J. Rold\'an-L\'opez was also supported by a predoctoral research grant FPI-2018 of `Universidad de La Rioja'.

\bibliographystyle{apalike}
\bibliography{bibliography}

\end{document}